\newtheorem{thm}{Theorem}[section]
\newtheorem{lemma}[thm]{Lemma}
\newtheorem{cor}[thm]{Corollary}
\numberwithin{equation}{section}
\theoremstyle{definition}
\newtheorem{definition}[thm]{Definition}
\theoremstyle{remark}
\newcommand{\N}{\mathbb{N}}
\newcommand{\Z}{\mathbb{Z}}
\newcommand{\R}{\mathbb{R}}
\newcommand{\Q}{\mathbb{Q}}
\newcommand{\C}{\mathbb{C}}
\newcommand{\aveN}{\frac{1}{N}\sum_{n=1}^N}
\newcommand\veps\varepsilon
\newcommand{\ddd}{\delta}
\newcommand{\ds}{\displaystyle}
\newcommand{\eee}{\varepsilon}
\newcommand{\fff}{{\varphi}}
\newcommand{\mmm}{\mu}
\newcommand{\oo}{\infty}
\newcommand{\sm}{\setminus}
\newcommand{\sse}{\subset}
\newcommand{\tttt}{\tau}
\newcommand{\tUU}{\widetilde{U}}
\renewcommand{\lll}{\lambda}
\newenvironment{case}{\left\{ \begin{array}{cl} }{ \end{array} \right.}
\definecolor{skyblue}{rgb}{0,0.4,0.6}
\definecolor{red}{rgb}{0.6,0,0}
\definecolor{green}{rgb}{0,0.6,0}
\definecolor{aquam}{rgb}{0.5,1.0,1.0}
\definecolor{bbrown}{rgb}{0.75,0.38,0.15}
\definecolor{Cyan}{rgb}{0,0.6,0.6}
\definecolor{Darkblue}{rgb}{0,0,0.9}
\definecolor{Dodgerblue2}{rgb}{0,0.5,1}
\definecolor{Green}{rgb}{0,0.5,0.1}
\definecolor{dGreen}{rgb}{0,0.5,0}
\definecolor{Kahki}{rgb}{1,1,0.5}
\definecolor{Magenta}{rgb}{1,0,1}
\definecolor{bMagenta}{rgb}{1,.6,1}
\definecolor{Orange}{rgb}{0.8,0.3,0}
\definecolor{dOrchid}{rgb}{0.7,0.2,0.4}
\definecolor{Orchid}{rgb}{1,0.5,1}
\definecolor{Purple}{rgb}{0.65,0.07,0.85}
\definecolor{Royalblue}{rgb}{0.6,0.85,0.87}
\definecolor{Tan}{rgb}{0.54,0.42,0.23}
\definecolor{bTan}{rgb}{0.94,0.82,0.63}
\definecolor{Turquoise}{rgb}{0,0.85,0.87}
\definecolor{Yellow}{rgb}{1,1,0}
\definecolor{bYellow}{rgb}{1,1,0.6}
\definecolor{bRed}{rgb}{1,0.7,0.7}
\definecolor{dRed}{rgb}{0.7,0,0}
\definecolor{dRed}{rgb}{1,0,0}
\definecolor{boxcolb}{rgb}{0.87,0.77,0.75}%rosybrown
\definecolor{boxcol}{rgb}{0.6,0.85,0.87}%cadetblue
\definecolor{boxcolgreen}{rgb}{0.64,0.93,0.79}
\definecolor{boxcolaa}{rgb}{.75,.99,.70}
\definecolor{boxcolbb}{rgb}{0.39,0.50,0.56}
\definecolor{boxcolcc}{rgb}{1,0.81,0.65}
\definecolor{yy}{rgb}{0.43,0.21,.18}
\definecolor{gA}{gray}{0.5}
\definecolor{gB}{gray}{0.8}
\definecolor{gC}{gray}{0.9}
\begin{document}

\title[weighted square averages in $L^1$]{Divergence of weighted square averages in $L^1$}

\author{Zolt\'an Buczolich}
\address{Department of Analysis, ELTE E\"otv\"os Lor\'and University, P\'azm\'any P\'eter S\'et\'any 1/c, 1117 Budapest, Hungary\newline\indent
ORCID ID: 0000-0001-5481-8797 }
\email{ zoltan.buczolich@ttk.elte.hu}
\urladdr{ http://buczo.web.elte.hu }
\author{Tanja Eisner}
\address{Institute of Mathematics, University of Leipzig\newline
P.O. Box 100 920, 04009 Leipzig, Germany}
\email{eisner@math.uni-leipzig.de}

\thanks{The first listed
author  was supported by the Hungarian
National Foundation for Scientific Research Grant 124003.}
%\newline\indent The second listed author  was supported by ???.}
%43A05: Abstract harmonic analysis: combinatorics
% 37A30 Ergodic theorems, spectral theory, Markov operators
% 11N37 Asymptotic results on arithmetic functions
\keywords{  Pointwise  ergodic theorems, averages along squares, polynomial weights, divergence}
%$L^1$-universally bad weights

\subjclass[2010]{Primary 37A05; Secondary 28D05, 37A30, 37A45, 40A30.}

\maketitle

\begin{abstract}
We study convergence of ergodic averages along squares with polynomial weights. For a given polynomial $P\in\Z[\cdot]$, consider the set of all $\theta\in[0,1)$ such that for every 
 ergodic  system $(X,\mu, T)$ there is a function $f\in L^1(X,\mu)$ such that the weighted averages along squares
$$
\aveN e(P(n)\theta)T^{n^2}f
$$
diverge on a set with positive measure. We show that this set is residual and includes the rational numbers as well as a dense set of Liouville numbers. 

%On one hand, this extends the divergence result for unweighted averages along squares in $L^1$ of the first author and Mauldin. On the other hand,  we also show  that the convergence result for linear weights for squares in $L^p$, $p>1$,  due to Bourgain  
%as well as the second author and 
%%and Eisner, 
%Krause does not hold for $p=1$.

On one hand, this extends the divergence result for unweighted averages along squares in $L^1$ of  the first author and Mauldin; on the other hand, it shows  that the convergence result for linear weights for squares in $L^p$, $p>1$,  due to Bourgain  
as well as the second author and 
%and Eisner, 
Krause does not hold for $p=1$.
\end{abstract}

\section{Introduction}

Originally motivated by physics, ergodic theory became an independent mathematical discipline in the 1930s with the appearence of the classical ergodic theorems due to von Neumann and Birkhoff. Since then, 
ergodic theorems have been ex\-tended and generalized in many directions and surprising connections to other areas of mathematics have been discovered. We 
mention  three  
% four  
major directions: the multiple, the subsequential and the weighted 
%and  the topological  
ergodic theorems.

\begin{itemize}
\item%[1)]
Multiple ergodic theorems, dealing with averages of the form
$$
\aveN T^n f_1\cdot T^{2n}f_2 \cdots T^{kn}f_k
$$
and introduced by Furstenberg in his celebrated ergodic theoretic proof of Szemer\'edi's theorem, have deep connections to, e.g., combinatorics, harmo\-nic analysis, number theory, group theory and form an active area of research, see Furstenberg \cite{F,F-book}, Furstenberg, Katznelson \cite{FK},  Bourgain \cite{B-double},
%Bergelson, Leibman \cite{BergL}, 
Host, Kra \cite{HostKra}, Ziegler \cite{Z}, Leibman \cite{L}, Bergelson, Leibman, Lesigne \cite{BLL}, Tao \cite{T}, Walsch \cite{Wa}, Donoso,  Sun \cite{DS}.

\item%[2)] 
Subsequential ergodic theorems concern averages 
$$
\aveN T^{k_n}f
$$
for a subsequence $(k_n)$ of $\N$ and are natural from the physical point of view.  They have been studied by Furstenberg, Bourgain, Wierdl and others using methods from harmonic analysis and number theory, see Bourgain \cite{B}, Wierdl \cite{W}, Nair \cite{N}, 
%Bellow \cite{Be}, 
Bellow, Losert \cite{BeL},  %Baxter, Olsen \cite{BO},
 Rosenblatt, Wierdl \cite{RW}, Akcoglu, Bellow, Jones, Losert, Reinhold-Larsson, Wierdl \cite{Akcoglu-etal}, 
%Berend, Lin, Rosenblatt, Tempelman \cite{BLRT},
% Boshernitzan, Kolesnik, Quas, Wierdl \cite{BKQW}, 
Krause \cite{K},  Zorin-Kranich \cite{ZK-primes}, Eisner \cite{E}.
%, Frantzikinakis, Host, Kra \cite{FHK}, Wooley, Ziegler \cite{WZ}. 

\item%[3)] 
Weighted ergodic theorems concerning averages  of the form
$$
\aveN a_n T^nf 
$$
with $(a_n)\subset \C$ go back to the Wiener-Wintner theorem \cite{WW} and are connected to both additive number theory and the recent Sarnak conjecture, see Green, Tao \cite{GT12}, Sarnak \cite{sarnak-lectures}, El Abdalaoui, Kulaga-Przymus, Lema\'nczyk, de la Rue \cite{AKLR}.
% Fan \cite{FanwB}.

The Wiener-Wintner theorem 
deals with pointwise convergence %of the averages 
%$$
%\aveN e(n\theta) T^nf,
%$$
for the family of linear weights $(e(n\theta))$, $\theta\in\R$, where %we use the number theoretic notation 
$e(x):=e^{2\pi i x}$, see Assani  \cite{AWW}.  
Lesigne \cite{Lesigne,lesigneWW} extended it to polynomial weights of the form $(e(P(n)))_{n=1}^\infty$ for real polynomials $P$, see also Frantzikinakis \cite{Fr} and Assani \cite{AWW} and the generalization to nilsequences by
Host, Kra \cite{HK} and  Eisner, Zorin-Kranich \cite{EZK}.  For  topological versions of the Wiener-Wintner theorem and its generalizations see  Robinson \cite{Robi}, Assani \cite[Chapter 2.6]{AWW} and Fan  \cite{Fantww}.

For more weighted ergodic theorems see Bellow, Losert \cite{BeL},  
%Baxter, Olsen \cite{BO}, 
Berend, Lin, Rosenblatt, Tempelman \cite{BLRT}, Bourgain, Furstenberg, Katznelson, Ornstein \cite{B-RTT}, Lin, Olsen, Tempelman \cite{LOT}, Eisner, Lin \cite{EL},  Fan \cite{FanwB}. 
For  a different type of weighted ergodic theorems with %other 
arithmetic weights we refer to Cuny, Weber \cite{[CW]} and Buczolich \cite{[Bu]}.

 A related   question to weighted ergodic theorems is about the convergence of ergodic series 
$\sum_{n=0}^\oo a_n f (T^nx).$ We can refer to Cohen and Lin
\cite{CoLi}, Fan \cite{Fanaec} and Cuny and Fan \cite{CuFa}. 
In Izumi \cite{Izu} the question
about the convergence of
$\sum \frac{1}{n}f (T^n x)$ was first raised. Positive answers
would be improvements of the ergodic theorems. But there are no such answers in
general. Negative answers were given by Halmos \cite{Halm}, Dowker and Erd\H os
\cite{DErd}, moreover by Kakutani and Petersen \cite{KaPe}.

%
%\item%[4)]
%Topological ergodic theorems. In these type of theorems usually a topological dynamical system is considered and for continuous functions instead of almost everywhere results everywhere convergence is verified. 
% For some early results see Robinson \cite{Robi} and Assani Chapter 2.6 of \cite{AWW}. For some recent results see Fan  \cite{Fantww}.
%
\end{itemize}

Surprisingly, pointwise convergence of the simplest mixture of the subsequential and weighted ergodic averages, namely polynomial averages with polynomial weights, is still open in general. 
Pointwise convergence of the unweighted square averages $\aveN T^{n^2}f$ was proved by Bourgain \cite{B86,B88,B} for $L^p$, $p>1$, answering a question of Bellow and Furstenberg, see also Krause \cite{K}, whereas divergence in $L^1$ was shown by Buczolich, Mauldin \cite{BuM}, extended by LaVictoire \cite{LaV} to all monomials. The averages along 
squares
%{\color{red}polynomials}
 with linear weights  
\begin{equation}\label{eq:ave-square-lin}
%{\color{red}\aveN e(n\theta)T^{P(n)}f}
\aveN e(n\theta)T^{n^2}f
\end{equation}
were treated by Bourgain \cite{B88-2,B} and Eisner, Krause \cite{EK} where pointwise convergence in $L^p$, $p>1$, was shown for every $\theta$, partially uniformly in $\theta$. Note that 
a Wiener-Wintner type result for the weighted double averages 
$$
\aveN e(n\theta)\, T^{n}f_1 \cdot T^{2n}f_2
$$
for bounded functions  
was obtained by Assani, Duncan, Moore \cite{ADM} from Bourgain's unweighted double recurrence result \cite{B-double}. See also Assani, Moore \cite{AM} for an extension to polynomial weights.

In this paper we study convergence of the averages (\ref{eq:ave-square-lin}) for $f\in L^1$  and show that for many $\theta$ the weight $(e(\theta n))$ is $L^1$-universally bad, extending the mentioned above convergence results in $L^p$, $p>1$, by Bourgain and Eisner, Krause as well as the divergence result of unweighted averages by Buczolich, Mauldin in $L^1$. More precisely, we treat polynomial weights $(e(P(n)\theta))$ with $P\in\Z[\cdot]$.

\medskip

%In the following we consider invertible transformations on nonatomic standard probability spaces only. 

\begin{definition}\label{def:weighted-ave}
Let $(a_n)\subset \C$ and $(k_n)$ be a subsequence of $\N$. We say that the pair $((a_n),(k_n))$ is \emph{$L^1$-universally bad} if for every   %aperiodic and 
ergodic invertible   system $(X,\mu,T)$  on a nonatomic standard probability space  there is $f\in L^1(X,\mu)$ such that the weighted averages along $(k_n)$
\begin{equation}\label{eq:weighted-ave}
\aveN a_nT^{k_n}f
\end{equation}
diverge on a set of positive measure.
\end{definition}

Some authors use aperiodic instead of ergodic in the definition of universally bad sequences, see for example \cite{AWW},  \cite{Be} and \cite{BeL}.  One can show analogously to the unweighted case that the two versions of the definition are equivalent.  
%The two versions of the definition are equivalent. This is due to the fact that the conjugates of an aperiodic system are dense for the weak topology  in the group of all invertible measure-preserving systems, see \cite{Halmergth}.
In this paper we use the ergodic version and this way we follow for example  \cite{BuM}, \cite{RW} and \cite{LaV}.

Thus,  using 
the terminology of Definition \ref{def:weighted-ave},   Buczolich and Mauldin \cite{BuM} showed that $((1), n^2)$ is $L^1$-universally bad.  
 This result was slightly generalized in  \cite{LaV} by LaVictoire to show that $((1),n^d)$
is $L^1$-universally bad and in \cite{Butor} by Buczolich to show that 
  for any polynomial $q(n)$ of degree two with integer coefficients the sequence
  $((1), q(n))$ is $L^1$-universally bad. Both these generalizations were based on the 
original argument of    \cite{BuM}. It is still an open question whether for a
"general" polynomial $q(n)$ of degree at least three with integer coefficients the
sequence  $((1), q(n))$ is $L^1$-universally bad. 

In this paper we restrict our attention to the base case $k_{n}=n^{2}$.
For the above mentioned slightly generalized cases which depend on the argument of
 \cite{BuM}  similar results to Theorem \ref{thm:main} can be obtained. 
The case of general polynomials  $q(n)$ of degree at least three with integer coefficients
is a challenging unsolved problem.

\medskip 

Our main result is the following.

\begin{thm}\label{thm:main}
Let $P\in\Z[\cdot]$ be a polynomial and let $\mathcal{M}$ be the set of all $\theta\in[0,1)$ such that the pair $(e(\theta P(n)),(n^2))$ is $L^1$-universally bad. Then the following assertions hold.
\begin{enumerate}
\item[(a)] $\Q\cap [0,1)\subset \mathcal{M}$.
\item[(b)] $\mathcal{M}\setminus \Q$  contains a dense set of Liouville numbers.
\item[(c)] $\mathcal{M}$ is a dense $G_\delta$ subset of $[0,1]$ and therefore   residual. 
\end{enumerate}
\end{thm}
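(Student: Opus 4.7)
\emph{Part (a) (rationals).} For $\theta = p/q \in \Q \cap [0,1)$, the identity $P(qm+r) \equiv P(r) \pmod{q}$ gives $e(\theta P(qm+r)) = e(\theta P(r))$, so the weight is periodic with period dividing $q$. Splitting by residue classes,
\[
\aveN e(\theta P(n))\, T^{n^2} f \;=\; \sum_{r=0}^{q-1} e(\theta P(r))\, \frac{1}{N} \sum_{\substack{n \leq N \\ n \equiv r\,(q)}} T^{n^2} f \;+\; O(N^{-1}),
\]
each inner sum being an unweighted average along $(qm+r)^2 = q^2 m^2 + 2qrm + r^2$, a degree-two integer polynomial, which is known to be $L^1$-universally bad by \cite{Butor}. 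The nontrivial step is to produce a \emph{single} $f \in L^1$ whose full weighted sum diverges on a positive-measure set: I would adapt the Buczolich--Mauldin Rokhlin-tower construction so that the bad set $E$ forces divergence of $\mathbf{1}_E$ on disjoint tower portions indexed by the residue class $r$, ensuring that the finitely many phase factors $e(\theta P(r))$ act only as a bounded multiplicative perturbation and cannot cancel the overall contribution.

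\medskip

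\emph{Part (b) (Liouville numbers).} Given an interval $I \subset [0,1)$, I would build $\theta \in I \setminus \Q$ as a rapidly converging limit of rationals $p_k/q_k \in I$ with $|\theta - p_k/q_k| < q_k^{-N_k}$ for $N_k \to \infty$ extremely fast. Setting $n_k := q_k^{N_k / (2 \deg P + 2)}$, the elementary bound
\[
\bigl| e(\theta P(n)) - e((p_k/q_k)\, P(n)) \bigr| \;\le\; 2\pi\, n^{\deg P}\, |\theta - p_k/q_k|
\]
makes the $\theta$-weighted and $(p_k/q_k)$-weighted averages uniformly close for $n \le n_k$. Applying part (a) to each $p_k/q_k$ produces bad sets $E_k$ of measure $\le 2^{-k}$ together with witness scales $N_k^* \in [n_{k-1}, n_k]$; for a suitably chosen sequence of coefficients $c_k > 0$, the function $f := \sum_k c_k \mathbf{1}_{E_k}$ lies in $L^1$ and its $\theta$-weighted averages exceed a fixed positive constant at each $N_k^*$, forcing divergence. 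Varying $I$ yields density. This is where the main obstacle lies: three scales---the Buczolich--Mauldin divergence time for $p_k/q_k$, the Liouville approximation scale $n_k$, and the $L^1$-summability governing the decay of $c_k \mu(E_k)$---must be carefully synchronized so that the divergence established at stage $k$ is not erased by the superposition with the later tails of $f$.

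\medskip

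\emph{Part (c) ($G_\delta$ density).} Density of $\mathcal{M}$ is immediate from (a), since $\Q \cap [0,1)$ is dense. The $G_\delta$ property I would derive via the Banach principle applied to a fixed ergodic system $(X,\mu,T)$: membership $\theta \in \mathcal{M}$ is equivalent to the failure of a weak-type $(1,1)$ inequality for the maximal operator
\[
A^*_\theta f \;:=\; \sup_N \Bigl| \aveN e(\theta P(n))\, T^{n^2} f \Bigr|.
\]
Since every finite average depends continuously on $\theta$, this failure can be expressed as a countable union, over $\lambda > 0$ rational, $N \in \N$, and $f$ ranging over a countable dense subset of $L^1$, of open conditions on $\theta$, realising $\mathcal{M}^c$ as an $F_\sigma$ subset of $[0,1]$.
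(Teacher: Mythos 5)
Your outline identifies the right high-level structure (exploit periodicity of the weight for rational $\theta$, approximate by rationals for part (b), reduce to a maximal inequality and use continuity for part (c)), but there is a genuine gap in part (a) which then propagates to (b).

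\textbf{Part (a).} After splitting the average over residue classes $r \bmod q$, you are left with phase factors $e(\theta P(r))$ multiplying unweighted averages along degree-two polynomials. You acknowledge the crux (``the nontrivial step is to produce a single $f$\ldots'') but do not carry it out, and the claim that the phase factors ``act only as a bounded multiplicative perturbation and cannot cancel'' is unjustified: nothing prevents the unweighted averages along the different residue classes from diverging at different scales $N$ and with relative signs that the phases then destroy. The paper's actual mechanism is different and is the key idea: one builds a separate function $f_j$ supported on the residue class $j-1 \bmod q^2$ (note $q^2$, not $q$). Then for $l$ in that class, $(T^{n^2}f_j)(l)\neq 0$ forces $q^2 \mid n^2$, i.e.\ $q\mid n$; and for such $n$, since $P(0)=0$ one has $q \mid P(n)$, whence $e((p/q)P(n)) = 1$ and $e(\theta P(n))\approx 1$ for $\theta$ near $p/q$. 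The weight is thus not merely bounded but effectively equal to $1$ on every term that contributes, so no cancellation can occur. This is the step your proposal is missing, and it also delivers the uniform estimate over a whole interval $[\frac{p}{q}-r,\frac{p}{q}+r]$ that part (b) needs.

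\textbf{Part (b).} The Liouville-approximation idea and the elementary bound $|e(\theta P(n)) - e((p_k/q_k)P(n))| \lesssim n^{\deg P}|\theta - p_k/q_k|$ are exactly the right tools. However your construction builds $f = \sum_k c_k\mathbf{1}_{E_k}$ on a single system, whereas the sets $E_k$ coming from the part-(a) construction live on different finite cyclic systems of wildly different size. Transferring these quantitative failures of the maximal inequality onto a single ergodic system and producing one $f\in L^1$ is precisely what the (weighted) Conze transference principle does, and this is not addressed. The paper proves a weighted Conze principle (via Halmos' conjugacy lemma and Sawyer's theorem) and then needs only to exhibit, for each $k$, \emph{some} finite system and test function violating a weak $(1,1)$ bound with constant $\geq k$ uniformly over the interval around $p_k/q_k$; the shrinking nested intervals then pin down the Liouville $\theta$.

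\textbf{Part (c).} Your plan is in the right spirit and essentially matches the paper: characterize $\mathcal{M}$ through the maximal inequality and use continuity in $\theta$ of finite maximal averages. Two things you take for granted: (i) the equivalence of $L^1$-universal badness (which quantifies over all ergodic systems) with failure of a uniform weak $(1,1)$ bound -- this is again the weighted Conze principle and requires proof; and (ii) the continuity of $\theta \mapsto \mu\bigl(\max_{N\le k}|\aveN e(\theta P(n)) T^{n^2}f|>\lambda\bigr)$ is not immediate from continuity of the averages -- the distribution function of a pointwise limit need not converge -- and the paper handles it with a $\liminf$/$\limsup$ sandwich using an $\veps$-perturbation of the threshold. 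These are fixable but not trivial omissions.
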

%\noindent TODO: every Liouville number is bad? $c$-dense; monomials instead of squares, $e(p(n))$ for a polynomial $p$?
%\medskip

\section{Main tool}

Next we need to recall some definitions and theorems, and tweak some arguments from  \cite{BuM}.
We start with the definition of $M-0.99$ distributed random variables.
\begin{definition}\label{conmkilenc}
For a positive integer $M$
we say that a function
or a random variable,
$X:[0,1)\to \R$ is {\it $M$$-$$0.99$ distributed}
  if
$X(x)\in  \{ 0,0.99,0.99\cdot \frac{1}2,...,
0.99\cdot 2^{-M+1}  \}$,
 and
$\lll ( \{ x\in[0,1):X(x)=0.99\cdot 2^{-l}  \})
=0.99 \cdot 2^{-M+l-1}$ for $l=0,...,M-1$,  where $\lambda$ denotes the Lebesgue measure.
\end{definition}

 An easy calculation shows that
\begin{equation}\label{*mean}
u=\int_{[0,1]} X(x)d\lambda (x)=0.99^{2}\cdot M \cdot 2^{-M-1}>
0.9\cdot M \cdot 2^{-M-1}.
\end{equation}

On the probability space
$([0,1),\lambda )$ we can
consider  pairwise independent  $M-0.99$-distributed
random variables $X_{h}$ for $h=1,...,K$ for a sufficiently
large $K$. Assume that $u$ denotes the mean of these variables.

By the weak law of large numbers
$\lambda \left  \{ x:\left  |
\frac{1}K\sum_{h=1}^{K} X_{h}(x)-u
\right|\geq \frac{u}2\right  \}\to 0$ as $K\to\oo$.
Given $\ddd>0$
 we can select $K$ so large that
 \begin{equation}\label{*meanlow}
 \lambda \left  \{ x:\frac{1}K
\sum_{h=1}^{K}
X_{h}(x)\geq \frac{u}2
\right  \}>1-\ddd.
 \end{equation}

With slight change of notation we recall   Theorem 8 from \cite{BuM}.

 \begin{thm}
\label{*c1c}
Given $\delta >0$, $M$ and $K$ there exist
$\tau_0 \in  {\ensuremath {\mathbb N}},$ 
 which defines a translation on $[0,1)$ by
$T(x)=x+\frac{1}{\tau_0}$ modulo $1$, 
$  E_{\delta}  {\subset} [0,1)$
with
$\lambda (  E_{\delta}  )<\delta $, 
a measurable function
$g:[0,1)\to [0,+ {\infty})$ with $\int_{[0,1)}gd\lambda <K\cdot 2^{-M+2},$
and pairwise independent  $M$$-$$0.99$-distributed
random variables
$ X_{h},$ $h=1,...,K$ 
 defined on $([0,1),\lambda)$  such that 
for all
$x\in [0,1) {\setminus}   E_{\delta} $ there exists
$N_{x}$ satisfying
\begin{equation}\label{*eqNx}
\frac{1}{N_{x}}\sum_{k=1}^{N_{x}}
g(T^{k^{2}}(x))>\sum_{h=1}^{K}  {X}_{h} (x).
\end{equation}
\end{thm}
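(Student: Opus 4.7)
The plan is to reproduce, with minor cosmetic changes, the construction from \cite{BuM} that gave Theorem 8 there. The underlying picture is that $([0,1),\lambda,T)$ with $T(x)=x+1/\tau_0$ is a rotation with orbit structure controlled by a single large integer $\tau_0$, and the function $g$ is built as a weighted sum of indicators of intervals $I_j=[j/\tau_0,(j+1)/\tau_0)$ chosen so that iterates along squares $T^{k^2}$ visit them in a quantitatively controlled way. The random variables $X_h$ then record, for each $h$, the largest dyadic level at which such visits are sufficiently dense on some initial segment $[1,N_x]$.

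First, I would choose $\tau_0$ to be a large prime depending on $M$, $K$ and $\delta$, so that the squares $\{k^2 \bmod \tau_0\}_{k=1}^{N}$ equidistribute on the set of quadratic residues with quantitative error (this is where Weyl/Gauss sum estimates for $\sum_k e(ak^2/\tau_0)$ are used). Next I would pick, for each $h=1,\ldots,K$, a nested family of residue sets $R_h^{(0)}\supset R_h^{(1)}\supset\cdots\supset R_h^{(M-1)}$ inside $\Z/\tau_0\Z$ whose sizes match the distribution in Definition \ref{conmkilenc}: $|R_h^{(l)}|/\tau_0 \approx 0.99\cdot 2^{-M+l-1}$. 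The key requirement is that these families, viewed across distinct $h$, be \emph{pairwise independent} with respect to the counting measure induced by quadratic residues; in \cite{BuM} this is achieved by splitting residues into disjoint blocks (one block per $h$) and using the Gauss-sum equidistribution to certify that the induced events factor.

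With these ingredients I would set
\begin{equation*}
g \;=\; \sum_{h=1}^{K}\sum_{l=0}^{M-1} 0.99\cdot 2^{-M+l+1}\,\mathbf{1}_{A_h^{(l)}},
\qquad A_h^{(l)}=\bigcup_{j\in R_h^{(l)}\setminus R_h^{(l+1)}}I_j,
\end{equation*}
so that $\int g\,d\lambda$ telescopes to a quantity of order $K\cdot 2^{-M+2}$ as required. For $x\in[0,1)$ and each $h$ I would let $X_h(x)$ be $0.99\cdot 2^{-l}$ on the set of $x$ for which the orbit $\{T^{k^2}x\}$ hits $A_h^{(l')}$ for all $l'\leq l$ with the expected density; this makes $X_h$ $M$-$0.99$ distributed, and pairwise independence is inherited from the independence of the residue families. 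Finally, the exceptional set $E_\delta$ is defined as those $x$ where the finitary equidistribution of squares along $\{k^2\bmod\tau_0\}_{k\leq N}$ fails to be valid by time $N_x$; by the quantitative equidistribution, its measure is $<\delta$ provided $\tau_0$ and the available range of $N_x$ are taken large enough.

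For $x\notin E_\delta$ I would then choose $N_x$ large enough so that the empirical density of $\{k\leq N_x:\,T^{k^2}x\in A_h^{(l)}\}$ agrees with $|R_h^{(l)}\setminus R_h^{(l+1)}|/\tau_0$ up to negligible error, uniformly in $h,l$. Summing the dyadic contributions of $g$ along the orbit and telescoping gives
\begin{equation*}
\frac{1}{N_x}\sum_{k=1}^{N_x}g(T^{k^2}x) \;\geq\; \sum_{h=1}^{K}X_h(x),
\end{equation*}
which is \eqref{*eqNx}. The main obstacle, and the genuinely delicate point, is the simultaneous construction in step two: arranging residue families that are both (i) distributed in the prescribed $M$-$0.99$ way under quadratic-residue counting, and (ii) pairwise independent across $h$, while keeping $\tau_0$ and $E_\delta$ under control. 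This is precisely the content of the Gauss-sum based construction in \cite{BuM}, which we would invoke here essentially verbatim.
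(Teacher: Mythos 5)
The paper does not prove this theorem: it is restated verbatim as Theorem~8 of Buczolich--Mauldin \cite{BuM} and used as a black box. The authors explicitly say the proof ``is highly non-trivial'' and ``quite technical'', point to \cite{BM} for a heuristic outline, to the Kurlberg--Rudnick papers \cite{[Kur],[Kur2]} for the underlying quadratic-residue randomness, and to \cite{LaV} for an alternative version. So there is no in-paper proof to compare your sketch against; the honest description of the paper's treatment is ``cite and move on.''

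Your outline captures the right high-level picture of the \cite{BuM} construction (rotation by $1/\tau_0$ with $\tau_0$ a large prime, Gauss-sum equidistribution of $\{k^2 \bmod \tau_0\}$, nested residue families indexed by a dyadic scale, exceptional set coming from failures of finitary equidistribution), and you are candid that the genuinely hard simultaneous construction is deferred to \cite{BuM}. But as written, the sketch has one concrete defect that would not survive being made precise: the definition of $X_h$ is circular. You set $X_h(x) = 0.99\cdot 2^{-l}$ on ``the set of $x$ for which the orbit $\{T^{k^2}x\}$ hits $A_h^{(l')}$ with the expected density for all $l' \leq l$.'' Defined this way, $X_h$ is a statistic of the orbit itself, not an a priori specified random variable with a prescribed $M$--$0.99$ distribution, and the target inequality $\frac{1}{N_x}\sum_{k\le N_x} g(T^{k^2}x) > \sum_h X_h(x)$ becomes essentially tautological rather than a quantitative statement. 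In \cite{BuM} the $X_h$ are determined directly by the residue class of $x$ via an explicit static partition of $[0,1)$; their $M$--$0.99$ distribution and pairwise independence are arranged in advance by the number-theoretic choice of residue sets, and \eqref{*eqNx} is then a nontrivial dynamical estimate comparing orbit averages of $g$ against that fixed partition. You also assert the telescoping bound $\int g \lesssim K\cdot 2^{-M+2}$ and the final inequality without derivation. In short: your picture is aligned with the known structure, but $X_h$ must be defined independently of the orbit for the theorem to have content, and the substantive estimates are invoked rather than proved---which matches the paper's own (non-)treatment, but is not a proof.
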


This theorem is highly non-trival and its proof in \cite{BuM} is quite technical.
In  \cite{BM} there is some heuristic outline of this argument. One of the key elements
of this argument is coming from number theory and is related to the randomness of
quadratic residues  (see \cite{[Kur]} and \cite{[Kur2]}). 
Another version of the argument of \cite{BuM}  can be found in
\cite{LaV}.

Our main tool will be the following corollary of Theorem \ref{*c1c}. Here and later, we denote by $\Z_d$ the cyclic group $\{0,\ldots, d-1\}$.  On $\Z_{d}$ 
we denote by $T$ the shift transformation $Tx=x+1\mod d$.

\begin{cor}\label{cor:black-box}
For every ${\mathbf N}_1\in \N$ and every $\veps, C>0$ there exist an arbitrarily large $\tau\in \N$, a set $E\subset \Z_{\tau}$ with proportion less than $\veps$ in $\Z_{\tau}$, a positive bounded function
$f$ on $\Z_{\tau}$ with $\int f\,d\mu< \veps$  ($\mu$ being the normalised counting measure), and ${\mathbf N}_2>{\mathbf N}_1$ such that the inequality
\begin{equation}\label{*bbC}
\max_{N\in[{\mathbf N}_1, {\mathbf N}_2]} \aveN (T^{n^2}f )(l)>C
\end{equation}
holds for every $l\in \Z_{\tau}\setminus E$.
\end{cor}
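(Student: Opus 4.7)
The plan is to derive the corollary from Theorem \ref{*c1c} by first tuning the parameters $M, K, \delta$ so that $\sum_{h=1}^K X_h > C$ on a large set while $\int g\, d\lambda$ remains small, and then transferring the dynamics of the rotation by $1/\tau_0$ on $[0,1)$ to the cyclic shift on $\Z_{\tau_0}$ by slicing $[0,1)$ along the fibers of the natural projection $x \mapsto \lfloor \tau_0 x\rfloor$.

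First I would fix $\delta > 0$ much smaller than $\veps$ and choose $M$ large enough that $0.9\, M\, \veps / 16 > C$. Combining $u > 0.9\, M\, 2^{-M-1}$ from (\ref{*mean}) with the bound $\int g\, d\lambda < K\cdot 2^{-M+2}$ then leaves enough room to simultaneously enforce $Ku/2 > C$ and $K \cdot 2^{-M+2} < \veps/4$ (take $K \approx \veps \cdot 2^{M-4}$). Enlarging $K$ further if necessary, (\ref{*meanlow}) guarantees that $\sum_h X_h(x) > C$ on a set of measure $> 1-\delta$. Applying Theorem \ref{*c1c} with these data supplies $\tau_0$, $E_\delta$, $g$, the $X_h$, and an escape time $N_x$ for each $x \in [0,1)\setminus E_\delta$; I would insist that $\tau_0 \geq \mathbf{N}_1$, which is permissible since the construction of \cite{BuM} operates at arbitrarily large scales, and then pick $\mathbf{N}_2$ large enough that $\lambda(\{x \notin E_\delta : N_x > \mathbf{N}_2\}) < \delta$. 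Letting $B$ be the set of $x \in [0,1) \setminus E_\delta$ with $\sum_h X_h(x) > C$ and $N_x \leq \mathbf{N}_2$, one has $\lambda(B) > 1 - 3\delta$ and
$$
\max_{\mathbf{N}_1 \leq N \leq \mathbf{N}_2} \frac{1}{N}\sum_{k=1}^N g(T^{k^2}x) > C \quad \text{for every } x \in B.
$$

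The final step is discretization. Set $\tau := \tau_0$ and write $x = \ell/\tau_0 + y$ with $\ell \in \Z_{\tau_0}$, $y \in [0, 1/\tau_0)$; define $g_y(\ell) := g(\ell/\tau_0 + y)$ and $B_y := \{\ell : \ell/\tau_0 + y \in B\}$. The rotation $T$ shifts only the coordinate $\ell$ cyclically, so $g(T^{k^2}x) = g_y(\ell + k^2 \bmod \tau_0) = (T^{k^2}g_y)(\ell)$ in the cyclic-shift sense on $\Z_{\tau_0}$. By Fubini,
$$
\tau_0 \int_0^{1/\tau_0} \int g_y\, d\mu\, dy = \int g\, d\lambda < \veps/4, \qquad \tau_0 \int_0^{1/\tau_0} \frac{|\Z_{\tau_0}\setminus B_y|}{\tau_0}\, dy = 1 - \lambda(B) < 3\delta.
$$
Applying Markov's inequality to the sum of these two nonnegative integrands yields some $y^* \in [0, 1/\tau_0)$ with both $\int g_{y^*}\, d\mu < \veps$ and $|\Z_{\tau_0} \setminus B_{y^*}|/\tau_0 < \veps$ (after taking $\delta < \veps/6$ and absorbing a constant factor into $\veps$ at the outset). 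Setting $f := g_{y^*}$, which is nonnegative and bounded because the $g$ produced in \cite{BuM} is a simple function, and $E := \Z_{\tau_0}\setminus B_{y^*}$, yields (\ref{*bbC}).

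The principal technical point is the lower bound $N_x \geq \mathbf{N}_1$: one has to track through the \cite{BuM} construction to confirm that the witnessing times in (\ref{*eqNx}) grow with the scale $\tau_0$, so that choosing $\tau_0$ above a threshold depending on $\mathbf{N}_1$ automatically pushes $N_x$ into the window $[\mathbf{N}_1, \mathbf{N}_2]$. The remaining difficulty is pure bookkeeping to make the thresholds for $M, K, \delta$ compatible with both the large-deviation estimate for $\sum_h X_h$ and the $L^1$-bound on $g$.
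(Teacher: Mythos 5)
Your proposal follows essentially the same architecture as the paper's proof: apply Theorem \ref{*c1c}, tune $M$ and $K$ so that $\sum_h X_h > C$ holds on a large set while $\int g$ stays small, then discretize via Fubini and a Markov/pigeonhole step to pick a good fiber $y^*$. The discretization step is correct and matches the paper's transference almost word for word. One minor stylistic difference: you tune $K$ so that $Ku/2 > C$ directly (which creates a two-sided constraint on $K$ that you then have to check is non-vacuous and compatible with the weak law threshold), whereas the paper leaves the $M,K$ choice free, lets $t_p := K\cdot\frac{0.9}{2}M_p 2^{-M_p-1}$ be whatever it is, and afterward rescales $g$ to $G := (C/t_p)\,g$; the rescaling trick decouples the large-deviation bookkeeping from the target constant $C$ and is cleaner, but your version works once the window for $K$ is checked to be nonempty.

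The one place where your argument has a genuine gap — and you correctly flag it as the principal technical point — is the lower bound $N_x \geq \mathbf{N}_1$. You propose two mechanisms: (i) requiring $\tau_0 \geq \mathbf{N}_1$, and (ii) the assertion that the witnessing times $N_x$ in \eqref{*eqNx} ``grow with the scale $\tau_0$''. Neither is sufficient as stated. Taking $\tau_0$ large does not by itself control $N_x$: as Theorem \ref{*c1c} is stated, the $N_x$ could in principle be small (even $N_x = 1$) for many $x$ regardless of how large $\tau_0$ is, since nothing in the conclusion ties the size of $N_x$ to $\tau_0$. The actual fix in the paper is more surgical and cannot be guessed from the statement of Theorem \ref{*c1c} alone: one re-enters the proof of Theorem 8 in \cite{BuM} (at the top of p.\ 1527) and replaces the starting parameter $A = 1$ there by $A = \mathbf{N}_1$; this single change forces $N_x \geq \mathbf{N}_1$ for every $x$ outside the exceptional set. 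The rest — enlarging the exceptional set slightly to cap $N_x$ by some $\mathbf{N}_2$, and noting that any integer multiple of $\tau_0$ also works so $\tau$ can be made arbitrarily large — you do have. So the proposal is structurally sound, but to close the gap you would need to verify the modification inside the \cite{BuM} construction rather than rely on a heuristic about witnessing times scaling with $\tau_0$.
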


\begin{proof}[Verification of Corollary \ref{cor:black-box} based on methods of  \cite{BuM}]

We can now modify slightly the proof of Theorem 1 on p. 1528 of \cite{BuM}.
We select $p$ such that 
\begin{equation}\label{*eqpsel}
\frac{1}{p}<\frac{\eee}{4}\text{ and }{32\cdot C}\cdot {4^{-p}}<\frac{\eee}2.
\end{equation}

We let $M_{p}=4^{p}$,  $\delta=\frac1p$  and select $K$ such that for $M_p-0.99$-distributed
random variables $X_{h}$,  $h=1,...,K$ we have \eqref{*meanlow} satisfied and hence
for this $K$ by \eqref{*mean}  we have
\begin{equation}\label{*Uest}
\lambda (U_p')>1-\ddd\text{ for }
U_p':=\left  \{ x:
\frac{1}K\sum_{h=1}^{K} X_{h}(x)>\frac{0.9}2
\cdot M\cdot 2^{-M-1}\right  \}.
\end{equation}

By Theorem \ref{*c1c} used with $\delta =\frac{1}p,$
$M_{p}$ and $K$ there exist $\tau_0 \in \N,$
$E_{1/p}\sse [0,1)$ and a periodic transformation
$T:[0,1)\to [0,1),$ $T(x)=x+\frac{1}{\tau_0 }$ modulo $1,$
$g:[0,1)\to [0,+\oo)$,
$X_{h}$ pairwise independent
$M_{p}-0.99$-distributed random variables defined on
$[0,1)$ such that $\lambda (E_{1/p})<\frac{1}p$
and for all $x\in [0,1)\sm E_{1/p}$
there exists $N_{x}$ such that
$$\frac{1}{N_{x}}\sum_{k=1}^{N_{x}}
g(T^{k^{2}}x)>\sum_{h=1}^{K}X_{h}(x)$$
and
$\int_{[0,1)}fd\lambda <K\cdot 2^{-M_{p}+2}.$

One can observe that on p. 1527 of \cite{BuM} at the beginning of the proof of Theorem 8 one can choose $A={\mathbf N}_1$ instead of $A=1$
and this implies that $N_{x}\geq {\mathbf N}_1$ holds in \eqref{*eqNx}.
By using a slightly larger exceptional set ${\widetilde{E}_{1/p}}\supset { {E_{1/p} }}$
still satisfying $\lll(\widetilde{E}_{1/p})<1/p$ we can select ${\mathbf N}_2$
such that ${\mathbf N}_1\leq N_{x}\leq {\mathbf N}_2$
holds for any $x\not\in \widetilde{E}_{1/p}$.
One can also observe that any integer multiple of $\tau_{0}$ could also be used, so $\tau_{0}$
can be arbitrarily large.

Put $U_{p}=U_{p}'\sm
\widetilde{E}_{1/p}.$
Then $\lambda (U_{p})>1-\frac{2}p$ and for
$x\in U_{p}$ there exists $ N_{x}\in [ {\mathbf N}_1, {\mathbf N}_2]$ such that
$$\frac{1}{N_{x}}
\sum_{k=1}^{N_{x}}g(T^{k^{2}}x)>\sum_{h=1}^{K}
X_{h}(x)>K\cdot \frac{0.9}2\cdot M_{p}\cdot 2^{-M_p-1}.$$
Thus letting $t_{p}=K\cdot \frac{0.9}2\cdot M_{p}\cdot 2^{-M_p-1}$,
and 
\begin{equation}\label{*Udef}
\tUU_{p}=\left  \{ x:\sup_{N}
\frac{1}N
\sum_{k=1}^{N}g(T^{k^{2}}x)>t_{p}\right  \}
\end{equation}
we have $U_{p}\sse \tUU_{p}$ and hence
$\lambda (\tUU_{p})>1-\frac{2}p.$
On the other hand,
$$ \frac{\int g d\lambda }{t_{p}}
%=\frac{\int |g|d\lambda }{t_{p}}
<
\frac{K\cdot 2^{-M_{p}+2}}
{K\cdot \frac{0.9}2\cdot M_{p}\cdot 2^{-M_p-1}}
< \frac{32}{M_{p}}.$$

%The argument on p. 1529 of \cite{BuM} and the above choice of ${\mathbf N}_1$ and ${\mathbf N}_2$ provide 
 %$t_{p}:=K\cdot \frac{0.9}2\cdot M_{p}\cdot 2^{-M_p-1}$
% and a set
%\begin{equation}\label{*Udef}
 %{\widetilde {U}}_{p}=\left  \{x:\sup_{{\mathbf N}_1\leq N\leq {\mathbf N}_2}
%\frac{1}N
%\sum_{k=1}^{N}g(T^{k^{2}}x)>t_{p}\right  \}
%\end{equation}
%which satisfies
%$\lambda ( {\widetilde {U}}_{p})>1-\frac{2}p>1-(\eee/2).$
%The argument on p. 1529 of \cite{BuM} yields
%$$\frac{\int g d\lambda}{t_{p}}=\frac{\int |g|d\lambda}{t_{p}}<
%\frac{K\cdot 2^{-M_{p}+2}}
%{K\cdot \frac{0.9}2\cdot M_{p}\cdot 2^{-M_p-1}}
%< \frac{32}{M_{p}}.$$
Now set $ G =\frac{C}{t_{p}}\cdot g.$ Then \eqref{*eqpsel} and the above inequality
imply that $\int  G   d\lll<\eee/2.$
We also put $ \overline{E} =[0,1)\sm {\widetilde {U}}_{p}$. Then $\lll( \overline{E} )<\eee/2$.

Next we need a sort of a transference argument to move the above results onto the integers.

Given $\fff:[0,1)\to \R$, $x\in [0,1/\tttt)$ put
$$\fff_{\tttt}(x)=\frac{1}{\tttt}\sum_{j=0}^{\tttt-1}\fff\Big(x+\frac{j}{\tttt}\Big) \text{ and }\lll_{\tttt}=\tttt\cdot \lll|_{[0,1/\tttt)},$$
where $\lll|_{[0,1/\tttt)}$ is the restriction of the Lebesgue measure onto $[0,1/\tttt)$.

By this notation we have 
\begin{equation}\label{*2*a}
\int_{[0,1)}\fff(x)d\lll(x)=\int_{[0,1/\tttt)}\sum_{j=0}^{\tttt-1}\fff\Big(x+\frac{j}{\tttt}\Big)d\lll(x)
\end{equation}
$$=\int_{[0,1/\tttt)}\fff_{\tttt}(x)\cdot \tttt d\lll(x)=\int_{[0,1/\tttt)}\fff_{\tttt}(x)d\lll_{\tttt}(x).$$

For $x\in [0,1/\tttt)$ and a measurable set $A\sse [0,1)$ let
$\mmm_{\tttt,x}(A):=\chi_{A,\tttt}(x)=\frac{1}{\tttt}\sum_{j=0}^{\tttt-1}\chi_{A}\Big(x+\frac{j}{\tttt}\Big)$.
Then
using \eqref{*2*a} with $\fff(x)=\chi_{ \overline{E} }(x)$ 
we obtain that $$\int_{[0,1/\tttt)}\mmm_{\tttt,x}( \overline{E} )d\lll_{\tttt}(x)
%<
 = 
\int_{[0,1)}\chi_{ \overline{E} }(x)d\lll(x)=\lll( \overline{E} )<\frac{\eee}{2}.$$
This implies that
\begin{equation}\label{*wml1}
\lll_{\tttt}\Big (\Big \{x\in [0,1/\tttt):\mmm_{\tttt,x}( \overline{E} )\geq \eee \Big  \}\Big )<\frac{1}{2}.
\end{equation}

Using \eqref{*2*a} with $\fff(x)= G (x)\geq 0$
we obtain similarly
 $$\int_{[0,1/\tttt)} G _{\tttt} (x)d\lll_{\tttt}(x)=\int_{[0,1)} G (x)d\lll(x)<\frac{\eee}{2}.$$
and this implies that
\begin{equation}\label{*wml2}
\lll_{\tttt}\Big (\Big \{x\in [0,1/\tttt): G _{\tttt}(x)\geq \eee \Big  \}\Big )<\frac{1}{2}.
\end{equation}
 Since $\lll_{\tttt}([0,1/\tttt))=1$ by \eqref{*wml1} and \eqref{*wml2} we can select
 an $x^{*}\in [0,1/\tttt)$  such that $\mmm_{\tttt,x^{*}}( \overline{E} )<\eee$
 and $ G _{\tttt}(x^{*})<\eee.$

 Now we can define $f:\Z\to[0,+\oo)$, periodic by $\tttt$  such that $f(l)= G (x^{*}+\{ l/\tttt \})$ where $\{ . \}$ denotes fractional part, this also defines a function on $\Z_{\tttt}$ which, for ease of notation is also denoted by $f$.
 
 To define the exceptional set $E$ we say that $l\in\Z$ belongs to $E$
 iff $x^{*}+\{ l/\tttt \}\in  \overline{E} $.
 Then $f$ and $E$ are both periodic by $\tttt$, $\int f d\mmm<\eee$,
 $\mmm(E)<\eee$, the definition of $ G $ and
 \eqref{*Udef} imply \eqref{*bbC}.
\end{proof}

\begin{comment}
 We will use the following corollary of results in Buczolich, Mauldin \cite{BuM}.

\begin{thm}[Buczolich, Mauldin]\label{thm:black-box}%[]
For every ${\mathbf N}_1\in \N$ and every $\veps, C>0$ there exist an arbitrarily large $\tau\in \N$, 
%which is a multiple of a given number (???),
 a periodic transformation $T:[0,1)\to[0,1)$ given by $Tx=x+\frac{1}{\tau}\mod 1$, a set $E\subset [0,1)$ with $\lambda(E)<\veps$, a function
$f\in L^\infty(X,\mu)$ with $f\geq 0$ and $\int f\leq \veps$, and ${\mathbf N}_2>{\mathbf N}_1$ such that the inequality
$$
\max_{N\in[{\mathbf N}_1, {\mathbf N}_2]} \aveN (T^{n^2}f )(x)>C
$$
holds for every $x\in [0,1)\setminus E$.
\end{thm}

\end{comment}

\section{Weighted Conze Principle}

\begin{definition}
Let $(X,\mu)$ be a probability space and let $(T_N)$ be a sequence of bounded linear operators on  $L^1(X,\mu)$. Define the corresponding \emph{maximal operator} $T^*$ by 
$$
(T^*f)(x):=\sup_{n\in\N} |(T_nf)(x)|\in[0,\infty],\quad x\in X,\ f\in L^1(X,\mu).
$$
We say that $(T_N)$ satisfies a \emph{weak $(1,1)$ maximal inequality} if there exists a constant $C>0$ such that for every $f\in L^1(X,\mu)$ and every $\lambda>0$ 
\begin{equation}\label{eq:max-ineq}
\mu(T^*f>\lambda)\leq\frac{C\|f\|_1}\lambda.
\end{equation}
\end{definition}

The following is a corollary of Sawyer's variation of Stein's principle, see \cite[Corollary 1.1]{sawyer}.
\begin{lemma}[Sawyer]\label{lemma:sawyer}
Let $(X,\mu, \mathbf{T})$ be an ergodic measure-preserving dynamical system and let $(T_N)$ be a sequence of bounded linear operators on  $L^1(X,\mu)$ commuting with the Koopman operator $\mathbf{T}$. Assume that $(T_N)$ does not satisfy a weak $(1,1)$ maximal inequality. Then there exists a function $f\in L^1(X,\mu)$ such that $T^*f=\infty$ a.e., and, in particular, $(T_Nf)$ diverges a.e.
 (Moreover, the set of such functions is residual in the Baire category sense.)
\end{lemma}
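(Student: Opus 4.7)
The plan is a Baire category argument in the Banach space $L^1(X,\mu)$. Define
\begin{equation*}
R := \{f \in L^1(X,\mu) : T^* f = \infty \ \mu\text{-a.e.}\} = \bigcap_{m, K \in \N} U_{m,K}, \qquad U_{m,K} := \{f \in L^1 : \mu(T^* f \ge m) > 1 - 1/K\}.
\end{equation*}
It suffices to show each $U_{m,K}$ is open and dense in $L^1$, so that $R$ is a (dense) $G_\delta$ and in particular residual, which establishes both the existence assertion and the Moreover clause; the ``in particular'' divergence statement follows since $T^*f = \infty$ a.e.\ precludes any a.e.\ limit. Openness is routine: boundedness of each $T_n$ on $L^1$ forces $T_n f_k \to T_n f$ in measure whenever $f_k \to f$ in $L^1$, and truncating $T^*$ to $\max_{n \le N}|T_n \cdot|$ before letting $N \to \infty$ yields lower semicontinuity of $f \mapsto \mu(T^* f \ge m)$.

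For density, the failure of the weak $(1,1)$ inequality provides, for any threshold $M > 0$, any $\eta > 0$, and any fixed $\alpha \in (0,1)$, a function $g \in L^1$ with $\|g\|_1 = \eta$ and $\mu(T^* g \ge M) \ge \alpha$: start from an extremizer of the failing inequality and rescale, using the positive homogeneity $T^*(cg) = |c| T^* g$. To amplify from $\alpha$ to $1 - 1/(2K)$, invoke the commutation $T_n \mathbf{T} = \mathbf{T} T_n$, which gives $T^*(\mathbf{T}^k g)(x) = (T^* g)(\mathbf{T}^k x)$; hence the translate $\mathbf{T}^k g$ has its good set shifted by $\mathbf{T}^{-k}$. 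Ergodicity of $\mathbf{T}$ then allows one to choose finitely many shifts $k_1,\ldots,k_L$, with $L = L(\alpha, K)$, so that $\mu\bigl(\bigcup_{j=1}^L \mathbf{T}^{-k_j}\{T^* g \ge M\}\bigr) > 1 - 1/(2K)$. A Rademacher combination $h = \sum_{j=1}^L \varepsilon_j \mathbf{T}^{k_j} g$ satisfies $\|h\|_1 \le L\eta$, and a probabilistic-selection argument at a near-maximizing index produces a deterministic sign pattern $\varepsilon_j \in \{\pm 1\}$ for which the bound $T^* h(x) \ge M$ persists on each translated good set.

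Finally, given $f_0 \in L^1$ and $\delta > 0$: either $f_0 \in R$ already (done), or $T^* f_0 < \infty$ a.e., in which case choose $M_0 < \infty$ with $\mu(T^* f_0 \le M_0) > 1 - 1/(2K)$. Applying the construction above with $M := M_0 + m$ and $\eta := \delta / L$ yields $h$ with $\|h\|_1 < \delta$ and $\mu(T^* h \ge M_0 + m) > 1 - 1/(2K)$. The pointwise inequality
\begin{equation*}
T^*(f_0 + h)(x) \ge T^* h(x) - T^* f_0(x),
\end{equation*}
derived by selecting $n^*(x)$ nearly maximizing $|T_n h(x)|$ and invoking $|T_{n^*}(f_0 + h)| \ge |T_{n^*} h| - |T_{n^*} f_0|$, then gives $T^*(f_0 + h) \ge m$ on the intersection of $\{T^* h \ge M_0 + m\}$ and $\{T^* f_0 \le M_0\}$, a set of measure $> 1 - 1/K$. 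Thus $f_0 + h \in U_{m,K}$ at distance $< \delta$ from $f_0$, establishing density. The main obstacle throughout is the amplification step: absent positivity of the $T_n$, a plain sum of translates can cancel in the maximal function, and Sawyer's Rademacher-selection refinement of Stein's maximal principle is what guarantees a sign pattern under which the pointwise lower bound on $T^*$ survives.
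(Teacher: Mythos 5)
The paper does not give a proof of this lemma: it is quoted directly from Sawyer (the cited \cite[Corollary~1.1]{sawyer}), so there is no in-paper argument to compare against. Your Baire-category skeleton --- residuality of $\{f: T^*f=\infty\ \text{a.e.}\}$ via openness and density of the $U_{m,K}$, with translates via the commutation $T_n\mathbf T=\mathbf T T_n$ and a Rademacher sign selection to prevent cancellation --- is indeed the route Sawyer/Stein take, and your ``in particular'' and pointwise estimate $T^*(f_0+h)\ge T^*h-T^*f_0$ are fine (one cosmetic fix: use strict inequalities $\mu(T^*f>m)$ for the lower semicontinuity in the openness step).

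There is, however, a genuine gap in the density step. You assert that the failure of the weak $(1,1)$ inequality ``provides, for any $M>0$, any $\eta>0$, and any fixed $\alpha\in(0,1)$, a $g$ with $\|g\|_1=\eta$ and $\mu(T^*g\ge M)\ge\alpha$, by rescaling an extremizer.'' Rescaling cannot deliver a prescribed $\alpha$. The weak-type ratio $\lambda\,\mu(T^*g>\lambda)/\|g\|_1$ is invariant under $(g,\lambda)\mapsto(cg,c\lambda)$, so from an extremizer with ratio $>C$ you can normalize the exceedance level to $M$ and conclude only that $\|g\|_1<(M/C)\,\beta$ where $\beta:=\mu(T^*g>M)$; the measure $\beta$ of the good set is \emph{not} under your control and may be arbitrarily small. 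This is not cosmetic, because it makes your parameter bookkeeping circular: you set $L=L(\alpha,K)$ (the number of translates needed to cover a proportion $1-1/(2K)$ of the space) and then $\eta:=\delta/L$, but $L$ depends on the size $\beta$ of the good set, which depends on the $g$ you extracted, which you wanted to extract \emph{after} fixing $\eta$. The step that actually closes the argument --- and which your sketch omits --- is the observation that the covering cost $L\lesssim c(\eps)/\beta$ and the norm bound $\|g\|_1<(M/C)\beta$ are governed by the \emph{same} unknown $\beta$, so the product $\|h\|_1\le L\|g\|_1\lesssim c(\eps)\,M/C$ is controlled by $C$ alone and can be made $<\delta$ by choosing $C$ large, independently of $\beta$. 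Replacing the fixed-$\alpha$ claim with this explicit trade-off repairs the proof; as written, the parameter choices do not close up. (A minor further point: the Rademacher selection gives the lower bound on $T^*h$ on a positive-proportion subcollection of the translated good sets, not on each one, but this is absorbed into the choice of $\eps$.)
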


We will need the following variation of Conze's principle.

\begin{thm}[Weighted Conze principle]\label{thm:weighted-Conze}
Let $(a_n)\subset\C$ be bounded %(or modulus one?) 
and $(k_n)$ be a subsequence of $\N$. Let $C\leq\infty$ be minimal such that for every 
system $(X,\mu,T)$  and  every $f\in L^1(X,\mu)$ 
%and every $\lambda>0$
\begin{equation}\label{eq:weighted-max-ineq}
\mu\left(\sup_{N\in\N}\left|\aveN a_nT^{k_n}f\right|>\lambda\right)\leq\frac{C}{\lambda}\|f\|_1\quad \forall \lambda>0
\end{equation}
holds. Then $C<\infty$  if and only if there exists an  ergodic invertible   system $(X,\mu,T)$  on a nonatomic standard probability space   
% aperiodic and ergodic   system $(X,\mu,T)$ 
such that for every $f\in L^1(X,\mu)$, the weighted averages (\ref{eq:weighted-ave}) converge a.e.  Equivalently, $C
%=C_{((a_n),(k_n))}
=\infty$ if and only if $((a_n),(k_n))$ is $L^1$-universally bad.
%$C<\infty$  if and only if there exists an aperiodic system $(X,\mu,T)$ such that for every $f\in L^1(X,\mu)$, the weighted averages (\ref{eq:weighted-ave}) converge a.e.
\end{thm}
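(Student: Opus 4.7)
The plan is to combine Sawyer's principle (Lemma \ref{lemma:sawyer}) with a Conze--Calder\'on-type transference: the weak $(1,1)$ maximal inequality (\ref{eq:weighted-max-ineq}) with some finite constant holds on every measure-preserving system if and only if it holds on some ergodic invertible nonatomic system. Once this is in place, both directions of the theorem follow quickly; it suffices to prove ``$C=\infty \Leftrightarrow$ universally bad'', which is the contrapositive of the stated equivalence.

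For the easy direction (convergence everywhere on some system implies $C<\infty$), assume $(X,\mu,T)$ is an ergodic invertible nonatomic system on which $T_N f := \aveN a_n T^{k_n} f$ converges $\mu$-a.e.\ for every $f\in L^1(X,\mu)$. Then $T^*f<\infty$ a.e.\ for every such $f$, so by the contrapositive of Lemma \ref{lemma:sawyer} the maximal inequality (\ref{eq:max-ineq}) holds on $(X,\mu,T)$ with some $C_0<\infty$; transference then propagates this to all systems, giving $C<\infty$. Conversely, if $C=\infty$ then the maximal inequality fails on at least one system, and by the contrapositive of transference it must then fail on every ergodic invertible nonatomic system; Lemma \ref{lemma:sawyer} produces on each such system an $f\in L^1$ with $T^*f=\infty$ a.e., and hence $(T_Nf)$ divergent on a set of full measure --- exactly $L^1$-universal badness.

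The main work is the transference, which I would carry out in two steps using Rokhlin's lemma and the classical Calder\'on transfer. Fix $N_0\in\N$ and $\veps>0$ and apply Rokhlin's lemma to the ergodic aperiodic $(X,\mu,T)$ to choose a base $B\subset X$ with $B,TB,\dots,T^{H-1}B$ disjoint, $\mu\bigl(\bigcup_{j=0}^{H-1}T^jB\bigr)>1-\veps$, and $H\gg k_{N_0-1}$. For any finitely supported $\varphi\colon\Z\to\C$ define the lift $f$ on $X$ by $f(T^jx)=\varphi(j)$ for $x\in B$, $0\le j<H$, and $f=0$ elsewhere; on the interior of the tower one has $(T_Nf)(T^jx)=(\tilde T_N\varphi)(j)$ where $\tilde T_N\varphi(j):=\frac{1}{N}\sum_n a_n\varphi(j+k_n)$. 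Feeding $f$ into the maximal inequality on $X$ and absorbing the tower-top and $\veps$-complement errors yields the discrete weak $(1,1)$ inequality for $\tilde T_N$ on $\ell^1(\Z)$. Standard Calder\'on transference then passes this inequality to any $(Y,\nu,S)$: for $g\in L^1(Y)$ the orbital sequence $\varphi_y(j):=g(S^jy)$ satisfies $(S_Ng)(S^jy)=(\tilde T_N\varphi_y)(j)$; integrating the discrete inequality over $j\in[0,M)$, using $S$-invariance of $\nu$, and letting $M\to\infty$ and then $N_0\to\infty$, one recovers (\ref{eq:weighted-max-ineq}) on $(Y,\nu,S)$ with constant $C_0$.

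The main obstacle is the edge-effect bookkeeping in the Rokhlin lift: the constructed $f$ fails to agree with shifts of $\varphi$ near the top of the tower (a strip of width $\sim k_{N_0-1}$) and on the $\veps$-complement, so the discrete inequality is initially obtained with a slightly worse constant and a truncated $\ell^1$-norm; choosing $H\gg k_{N_0-1}$ and $\veps$ small drives these errors to $o(1)$ and recovers the constant $C_0$. Apart from this technical point, the argument follows the standard Calder\'on pattern.
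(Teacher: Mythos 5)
Your overall skeleton — Sawyer's lemma plus a transference step — matches the paper's, but the transference itself is carried out by a genuinely different route. The paper argues entirely inside the standard nonatomic probability space: since all such spaces are measure-isomorphic, it suffices to pass from the given ergodic $T$ to any other invertible $\tilde T$ on the same $(X,\mu)$, and this is done via the Halmos conjugacy lemma ($S_l T S_l^{-1}\to\tilde T$ weakly, hence strongly for the Koopman operators) together with a norm-approximation argument that keeps the weak $(1,1)$ constant. You instead take the classical Calder\'on route: first push the maximal inequality from the given system down to $\ell^1(\Z)$ via Rokhlin towers, then transfer the resulting discrete weak $(1,1)$ bound to an arbitrary measure-preserving system by integrating over orbits. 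Both are correct. Your version has the advantage of directly covering arbitrary (not necessarily invertible) systems, since Calder\'on transference needs only the orbit map; the paper's version is shorter and avoids all tower bookkeeping, at the cost of staying within the class of invertible transformations on a fixed Lebesgue space. One small inaccuracy in your write-up: the ``tower-top and $\veps$-complement errors'' you anticipate do not actually arise. You lift a finitely supported $\varphi$ to $f$ on the tower, translate $\varphi$ so its support sits at height $\geq k_{N_0}$ with $H$ chosen large enough that $j+k_n<H$ for all relevant $j,n$, and then the discrete bad set embeds exactly (level by level, with weight $\mu(B)$) into the bad set of $T^*f$ on $X$, while $\|f\|_{L^1}=\mu(B)\|\varphi\|_{\ell^1}$. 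Since $f$ is supported on the tower, the $\veps$-complement never appears, and the constant $C_0$ transfers exactly, not merely up to $o(1)$. This does not affect the validity of your argument, only its efficiency.
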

\noindent 
%Thus, $((a_n),(k_n))$ is $L^1$-universally bad  if and only if $C=C_{((a_n),(k_n))}=\infty$.
%
The proof is an adaptation of the argument in Rosenblatt, Wierdl \cite[Proof of Theorem 5.9]{RW}  which is based on the original work by Conze \cite{[Cz]}.
%and is based on the  Halmos conjugacy lemma.
\begin{proof}
The ``only if'' part is trivial. To show the ``if'' part,  assume that for an   ergodic invertible   system $(X,\mu,T)$  on a nonatomic  standard probability space   and every $f\in L^1(X,\mu)$, the weighted averages (\ref{eq:weighted-ave}) converge a.e. By Lemma \ref{lemma:sawyer}, there is $C>0$ such that (\ref{eq:weighted-max-ineq}) holds for every $f\in L^1(X,\mu)$.% and every $\lambda>0$.
 
Since all nonatomic  standard probability spaces are isomorphic,
%to $[0,1]$ with the Lebesgue measure, 
it suffices to show that for every %  aperiodic and ergodic  
(invertible) transformation $\tilde{T}$ on $(X,\mu)$, (\ref{eq:weighted-max-ineq}) holds for the same constant $C$ and every $f\in L^1(X,\mu)$. 
% and every $\lambda>0$. 
Take such $\tilde{T}$. By the Halmos conjugacy lemma, there exists a sequence $(S_l)$ of invertible transformations such that $\lim_{l\to\infty}S_lTS_l^{-1}=\tilde{T}$ in the weak topology. %We first verify that
Then by a standard approximation argument,  the Koopman operators on $L^1(X,\mu)$ (which we denote by the same letter) satisfy 
$\lim_{l\to\infty}S_lTS_l^{-1}=\tilde{T}$ in the  strong operator topology. Thus $\lim_{l\to\infty}S_lT^nS_l^{-1}=\tilde{T}^n$ in the strong operator topology for every $n\in\N$.
%Indeed, since for unitary operators weak and strong operator topologies coincide and $\|\cdot\|_1\leq\|\cdot\|_2$, the assertion holds for the restriction of the operators on $L^2(X,\mu)$. The rest follows from 

Let now $f\in L^1(X,\mu)$, $\lambda>0$ and $M\in\N$. By monotonicity it suffices to show that  
$$
\mu\left(\sup_{1\leq N\leq M}\left|\aveN a_n\tilde{T}^{k_n}f\right|>\lambda\right)\leq\frac{C}{\lambda}\|f\|_1.
$$
Since $(a_n)$ is bounded, $\aveN a_nS_lT^{k_n}S_l^{-1}f$ converges to $\aveN a_n\tilde{T}^{k_n}f$ in $L^1(X,\mu)$ for every $N\in\N$, and the same of course holds for  $\sup_{1\leq N\leq M}$ of the absolute value. 
Since for every sequence $(g_n)\subset L^1(X,\mu)$ converging in norm to $g\in L^1(X,\mu)$ one has
$\lim_{n\to\infty}\mu(x: g_n(x)>\lambda)=\mu(x: g(x)>\lambda)$, it suffices to show that  
$$
\mu\left(\sup_{1\leq N\leq M}\left|\aveN a_nS_lT^{k_n}S_l^{-1}f\right|>\lambda\right)\leq\frac{C}{\lambda}\|f\|_1
$$
or equivalently, by the measure-preserving property of $S_l$,
%\begin{equation}\label{eq:weighted-max-ineq-finite}
$$
\mu\left(\sup_{1\leq N\leq M}\left|\aveN a_nT^{k_n}g\right|>\lambda\right)\leq\frac{C}{\lambda}\|g\|_1
$$
%\end{equation}
for $g:=S_l^{-1}f$. But this holds by the definition of the constant $C$ and monotonicity.
\end{proof}

\section{Proof of  the $G_\delta$ property}
%Theorem \ref{thm:main}c) assuming denseness}

We first prove that $\mathcal{M}$ is a $G_\delta$ set. The denseness follows from  Theorem \ref{thm:main} a) or b) and will be proven in the following sections.  

\begin{proof}[Proof of Theorem  \ref{thm:main} c) assuming Theorem  \ref{thm:main} a) or b)]

Observe that by Theorem \ref{thm:weighted-Conze}, $\theta\in\mathcal{M}$ if and only if for every $C\in\Q$ there  exist  a system $(X,\mu,T)$, a function $f\in L^1(X,\mu)$ with $\|f\|_1=1$ and $\lambda>0$ such that 
\begin{equation}\label{eq:categ}
\mu\left(\sup_{N\in\N}\left|\aveN e(P(n)\theta)T^{n^2}f\right|>\lambda\right)>\frac{C}\lambda.
\end{equation}

Since $\ds \aveN e(P(n)\theta)T^{n^2}f(x)$ appears at many places in this proof we 
introduce the notation $$\sigma_{N}^{(\theta)}f(x):=\aveN e(P(n)\theta)T^{n^2}f(x).$$

By monotonicity of the sets 
$
%A_k:=%\left
\{x\in X: \max_{N\leq k}
%\left
| \sigma_{N}^{(\theta)}f(x)
%\right
|>\lambda
%\right
\},
$ 
(\ref{eq:categ}) is equivalent to the existence of $k\in \N$ such that 
$$
\mu\left(\max_{N\leq k}\left| \sigma_{N}^{(\theta)}f(x)\right|>\lambda\right)>\frac{C}\lambda.
$$
Thus we obtain
%\begin{eqnarray*}
%\mathcal{M}=\bigcap_{C\in\Q} \bigcup_{(X,\mu,T),f,\lambda,k}   &\Big \{&\theta\in[0,1):\, \mu \Big (\max_{N\leq k}\Big | \sigma_{N}^{(\theta)}f\Big |>\lambda\Big )\\& >& \frac{C}\lambda \|f\|_1\Big \},
%\\
%&=:& \bigcap_{C\in\Q} \bigcup_{(X,\mu,T),f,\lambda,k} \M_{C,(X,\mu,T),f,\lambda},
%\end{eqnarray*}
$$
\mathcal{M}=\bigcap_{C\in\Q} \bigcup_{(X,\mu,T),f,\lambda,k}   \Big \{\theta\in[0,1):\, \mu \Big (\max_{N\leq k}\Big | \sigma_{N}^{(\theta)}f\Big |>\lambda\Big ) > \frac{C}\lambda \|f\|_1\Big \},
$$
where under the union sign $(X,\mu,T)$ denotes an arbitrary measure-preserving system, $f\in L^1(X,\mu)$ an arbitrary function with $\|f\|_1=1$, $\lambda>0$ an arbitrary real number and $k$ an arbitrary natural number.
It remains to show that each of the sets on the right
%$\M_{C,(X,\mu,T),f,\lambda}$ 
is open, and for that it suffices to show that for given $(X,\mu,T)$, $f$, $\lambda$ and $k$, the $1$-periodic function
$$
g:\R\to [0,1],\quad 
g(\theta):= \mu \left(\max_{N\leq k}\left| \sigma_{N}^{(\theta)}f\right|>\lambda\right)
$$
is continuous.

Let $\theta\in \R$, $(\theta_j)_{j=1}^\infty\subset\R$ with $\lim_{j\to\infty}\theta_j=\theta$ and let $\veps>0$.  Using the elementary estimate $|e(x)-e(y)|=|e^{2\pi i x}-e^{2\pi i y}|\leq 2 \pi |x-y| $,  by  
$$
\left| \sigma_{N}^{(\theta)}f-
 \sigma_{N}^{(\theta_j)}f\right|\leq  2 \pi  \sup_{ n\in  [1,N]}|P(n)||\theta-\theta_j|T^{n^2}|f|,
$$
we obtain for every $j\in\N$
\begin{eqnarray*}
\mu \left(\max_{N\leq k}\left| \sigma_{N}^{(\theta)}f\right|>\lambda+\veps\right)
&\leq&
\mu \left(\max_{N\leq k}\left| \sigma_{N}^{(\theta_j)}f\right|>\lambda\right)\\
&+&
\mu( 2\pi \sup_{ n\in [1,k]}|P(n)||\theta-\theta_j|T^{n^2}|f|>\veps).
\end{eqnarray*}
Since $T$ is $\mu$-preserving and $\|f\|_1=1$, the last summand on the right hand side equals 
$$
\mu\left(
%|f|> \frac{\veps}{\sup_{[1,N]}|P||\theta-\theta_j|}
\sup_{ n\in [1,k]} 2\pi |P (n) ||\theta-\theta_j||f|>\veps
\right)\leq \frac{\sup_{ n\in [1,k]} 2\pi |P (n) ||\theta-\theta_j|}{\veps}.
$$
Therefore we have for every $\veps>0$
%\begin{eqnarray*}
%\mu \left(\max_{N\leq k}\left| \sigma_{N}^{(\theta)}f\right|>\lambda+\veps\right)
%\end{eqnarray*}
$$ 
\mu \left(\max_{N\leq k}\left| \sigma_{N}^{(\theta)}f\right|>\lambda+\veps\right)
\leq
\liminf_{j\to\infty}
\mu \left(\max_{N\leq k}\left| \sigma_{N}^{(\theta_j)}f\right|>\lambda\right),$$
implying, by letting $\veps\to 0$,
\begin{eqnarray*}
g(\theta)
&\leq&
\liminf_{j\to\infty} g(\theta_j).
\end{eqnarray*}
Analogously one shows $g(\theta)\geq\limsup_{j\to\infty} g(\theta_j) $, implying the continuity of $g$ and completing the proof.

\end{proof}

\section{Reduction}

We first reduce Theorem \ref{thm:main}{ a) and b)}  to the following. 

\begin{thm}\label{thm:main-claim}
Let $P\in\Z[\cdot]$ be a polynomial with $P(0)=0$.
For every rational number $\frac{p}{q}\in[0,1)$ and every $k\in\N$ there exist $r>0$, a system $(X,\mu,T)$ and a positive function $f\in L^\infty(X,\mu)$ satisfying 
\begin{equation}\label{eq:to-show}
\int f
\leq 1, 
\quad 
\mu\left(\sup_N \left|\aveN e(\theta P(n))T^{n^2}f\right|>k\right)>1-\frac1k
\end{equation}
for every $\theta \in [\frac{p}{q}-r,\frac{p}{q}+r]$. 
\end{thm}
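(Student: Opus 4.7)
The plan is to prove the inequality first for $\theta=\frac{p}{q}$ itself, with $N$ drawn from a fixed finite range $[\mathbf{N}_{1},\mathbf{N}_{2}]$ supplied by Corollary~\ref{cor:black-box}, and then extend to a neighborhood $[\frac{p}{q}-r,\frac{p}{q}+r]$ via uniform continuity of the weight in $\theta$ on that finite range. The key structural observation is that $P(0)=0$ lets us factor $P(x)=xQ(x)$ with $Q\in\Z[x]$, so $q\mid P(qm)$ and therefore $e(\tfrac{p}{q}P(qm))=1$ for all $m$; more generally $e(\tfrac{p}{q}P(n))$ is $q$-periodic in $n$ and depends only on $n\bmod q$.

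Concretely, I would apply Corollary~\ref{cor:black-box} with parameter $\veps<\tfrac{1}{k}$, the prescribed $\mathbf{N}_{1}$, and a suitably large $C$ to obtain $\tilde\tau\in\N$, a function $g\colon\Z_{\tilde\tau}\to[0,\infty)$ with $\int g<\veps$, and $\mathbf{N}_{2}>\mathbf{N}_{1}$ so that $\max_{M\in[\mathbf{N}_{1},\mathbf{N}_{2}]}\frac{1}{M}\sum_{m=1}^{M}g(y+m^{2})>C$ for all $y$ outside an exceptional set of proportion less than $\veps$. Take $X=\Z_{q^{2}\tilde\tau}$ with the normalized counting measure $\mu$ and $T$ the shift by $1$, and lift $g$ to $f(x):=g(\lfloor x/q^{2}\rfloor\bmod\tilde\tau)$, noting that $\int f=\int g\le 1$ after a harmless rescaling. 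Writing $x=q^{2}y+a$ with $0\le a<q^{2}$ and splitting $n=qm+s$ by residue class $s\in\{0,\ldots,q-1\}$, the identity $e(\tfrac{p}{q}P(n))=e(\tfrac{p}{q}P(s))$ gives
\[
\aveN e(\tfrac{p}{q}P(n))T^{n^{2}}f(x)=\sum_{s=0}^{q-1}e(\tfrac{p}{q}P(s))\,A_{s}(N,y,a),
\]
where $A_{s}(N,y,a)=\frac{1}{N}\sum_{m:\,qm+s\le N}g(y+m^{2}+\rho_{s}(m,a))$ with $\rho_{s}(m,a)=\lfloor(a+2qms+s^{2})/q^{2}\rfloor$. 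Since $\rho_{0}\equiv 0$, the main term $A_{0}$ is independent of $a$, and for $N=qM$ with $M$ achieving the Corollary's maximum one has $A_{0}>C/q$ outside a small exceptional set in $y$.

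The main obstacle is the control of the error $\sum_{s\ne 0}e(\tfrac{p}{q}P(s))A_{s}$: the naive bound $|A_{s}|\le\|g\|_{\infty}/q$ is too weak, because in the construction behind Corollary~\ref{cor:black-box} the value $\|g\|_{\infty}$ typically scales with $C$. The natural fix is to exploit the detailed structure of the Buczolich--Mauldin function $g$ from \cite{BuM}, whose peaks are placed using the random-like distribution of quadratic residues $\{n^{2}\bmod p\}$: the shifted-square sequences $\{m^{2}+\rho_{s}(m,a)\}_{m}$ with $s\ne 0$ should hit these peaks far less often than the pure squares, so that $A_{s}(N,y,a)\lesssim\int g<\veps$ uniformly in $a$ for most $y$. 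Combined with $A_{0}>C/q$, this yields $\bigl|\aveN e(\tfrac{p}{q}P(n))T^{n^{2}}f(x)\bigr|\ge A_{0}-(q-1)\veps>k$ on a set of $x$ of measure exceeding $1-\tfrac{1}{k}$, once $C$ is chosen large enough. An alternative would be to restrict $f$ to be supported on multiples of $q^{2}$, which annihilates the $s\ne 0$ terms but limits divergence to a set of measure $1/q^{2}$, after which the full measure bound must be recovered by a separate construction. Finally, for $\theta\in[\frac{p}{q}-r,\frac{p}{q}+r]$, the elementary estimate $|e(\theta P(n))-e(\tfrac{p}{q}P(n))|\le 2\pi r|P(n)|$ valid for $n\le\mathbf{N}_{2}$ gives $|\aveN(e(\theta P(n))-e(\tfrac{p}{q}P(n)))T^{n^{2}}f(x)|\le 2\pi r\sup_{n\le\mathbf{N}_{2}}|P(n)|\cdot\|f\|_{\infty}$; choosing $r$ small enough in terms of $\mathbf{N}_{2}$, $P$, and $\|f\|_{\infty}$ makes this uniformly less than $1$, completing the proof after a constant-factor adjustment in $k$.
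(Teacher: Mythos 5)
Your main route has a genuine gap, and your ``alternative'' route is where the paper actually goes but you stop short of the real work. In your primary construction you lift $g$ to $f(x)=g(\lfloor x/q^{2}\rfloor\bmod\tilde\tau)$, so the average over $n$ splits by residue $s=n\bmod q$ into $A_{0}$ plus cross-terms $A_{s}$, $s\neq 0$. You correctly note that $\|g\|_{\infty}$ grows with $C$ in the Buczolich--Mauldin construction, so the naive $|A_{s}|\le\|g\|_{\infty}/q$ is useless; but the fix you propose --- that the shifted quadratic sequences $m^{2}+\rho_{s}(m,a)$ with $\rho_{s}(m,a)=\lfloor(a+2qms+s^{2})/q^{2}\rfloor$ ``should hit the peaks of $g$ far less often'' so that $A_{s}\lesssim\int g$ --- is a heuristic, not an argument. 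The function $g$ produced by Theorem~\ref{*c1c} is a black box here, and nothing in the statement of that theorem or of Corollary~\ref{cor:black-box} controls its averages along the perturbed quadratics $m^{2}+\rho_{s}(m,a)$; establishing that would require re-opening the entire number-theoretic machinery of \cite{BuM} concerning quadratic residues. As written, this step does not follow.

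Your alternative --- take $f$ supported on multiples of $q^{2}$, which kills the $s\ne 0$ terms since then $T^{n^{2}}f(l)\ne 0$ forces $q^{2}\mid n^{2}$, i.e.\ $q\mid n$ --- is the correct structural idea and is exactly what the paper does. But the clause ``after which the full measure bound must be recovered by a separate construction'' is where the substance of the paper's proof lives, and you do not supply it. The paper runs the black box $q^{2}$ times, once for each residue class $j\in\{1,\ldots,q^{2}\}$, producing $f_{j}$ supported on $\{l\equiv j-1\ (\mathrm{mod}\ q^{2})\}$ and attaining its maximum over a fresh time window $[qN_{j-1},qN_{j}]$ disjoint from the previous ones. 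For a point $l$ in the support of $f_{j}$, the average of $f=\sum_{j}f_{j}$ is decomposed as $I_{N}+II_{N}+III_{N}$: the main term $I_{N}$ from $f_{j}$ itself is large by the Corollary; the contribution $II_{N}$ from earlier $f_{m}$ ($m<j$) is crudely bounded by $\|f_{m}\|_{\infty}$, which is rendered negligible by choosing the later thresholds $C_{j}$ to grow relative to $\max_{m<j}\|f_{m}\|_{\infty}$; and the contribution $III_{N}$ from later $f_{m}$ ($m>j$) is controlled not pointwise but in measure, by a Chebyshev-type estimate using the rapidly decreasing $\veps_{m}$. The hierarchical choice $\veps_{1}\ge\veps_{2}\ge\cdots$, $C_{1}\le C_{2}\le\cdots$ and the nested time scales are precisely the ``separate construction'' you waved at. Your uniform-continuity step for $\theta$ near $p/q$ (using $P(0)=0$ so that $q\mid P(n)$ when $q\mid n$, hence $e(\tfrac{p}{q}P(n))=1$ and $|e(\theta P(n))-1|<1/2$ for small $r$) matches the paper, but without the full-measure recovery the proof is incomplete.
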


 The system $(X,\mu,T)$ will be a shift on $\Z_{\tau q^2}$ 
with suitable $\tau$ and $q$. 
We now show that Theorem \ref{thm:main-claim} implies Theorem \ref{thm:main}. 

\begin{proof}[Proof of Theorem \ref{thm:main} {a) and b)} based on Theorem \ref{thm:main-claim}]
Assume that Theorem \ref{thm:main-claim} holds and let $P\in\Z[\cdot]$. Since multiplication by a non-zero constant does not affect divergence, we can assume without loss of generality that $P(0)=0$. 

 Let $\frac{p_1}{q_1}\in[0,1)$ be arbitrary. 
The claim for $k:=1$ implies the existence of an arbitrarily small $r_1>0$ such that there  exist  a system $(X_1,\mu_1,T_1)$ and a positive $f_1\in L^\infty (X_1,\mu_1)$ such that 
$$
\int f_1 \leq 1,\quad \mu_1\left(\sup_N \left|\aveN e(\theta P(n))T_{1}^{n^2}f_{1}\right|>1\right)>0
$$
holds for every $\theta \in[\frac{p_1}{q_1}-r_1,\frac{p_1}{q_1}+r_1]$. Take now an arbitrary rational number $\frac{p_2}{q_2}\in  (\frac{p_1}{q_1}-r_{1},\frac{p_1}{q_1}+r_{1})$. The claim for $k:=2$ implies the existence of an arbitrarily small $r_2$, a  system $(X_2,\mu_2,T_2)$ and a positive function $f_2\in L^\infty (X_2,\mu_2)$ such that 
$$
\int f_2 \leq 1,\quad \mu_2\left(\sup_N \left|\aveN e(\theta P(n))T_{2}^{n^2}f_{2}\right|>2\right)>\frac12
$$
holds for every $\theta \in [\frac{p_2}{q_2}-r_2,\frac{p_2}{q_2}+r_2]
\sse  [\frac{p_1}{q_1}-r_1,\frac{p_1}{q_1}+r_1] $.
 Repeating the procedure we get a  nested  sequence of rapidly decreasing intervals $[\frac{p_k}{q_k}-r_k,\frac{p_k}{q_k}+r_k]$ such that 
$$
 \{  \theta  \}  :=\bigcap_{k=1}^\infty\left[\frac{p_k}{q_k}-r_k,\frac{p_k}{q_k}+r_k\right]
$$
satisfies the following property: For every $k\in\N$ there exist a  system $(X,\mu,T)$ and a positive function $f_{ k }\in L^\infty (X,\mu)$ with property (\ref{eq:to-show})
 satisfied with $f_{k}$ instead of $f$.   
%$$
%\int f \leq 1,\quad \mu\left(\sup_N \left|\aveN e(\theta n)T^{n^2}f\right|>k\right)>1-\frac1k.
%$$
  Since $1-\frac{1}{k}\geq \frac{1}{2}$ for $k\geq 2$ 
by the weighted Conze principle (Theorem \ref{thm:weighted-Conze}), $((e(\theta P(n))),(n^2))$ is $L^1$-univerally bad. Note that we have some freedom in the above construction of $\theta$ by choosing each $\frac{p_k}{q_k}$ and by taking $r_k$ as small as we wish.

To show (a), by taking in the above construction $p_k:=p_1$ and $q_k:=q_1$ for every $k\in\N$, we have $\theta:=\frac{p_1}{q_1}$ which was arbitrary, and (a) follows.

To show (b), take in the above construction $\frac{p_k}{q_k}$ being all different
%with $\lim_{q_k}=\infty$ 
and decrease $r_k$ 
such that $r_k<\frac{1}{q_k^k}$ and $r_k<|\frac{p_k}{q_k}-\frac{p_{k-1}}{q_{k-1}}|$ hold. Then $\theta\notin\{\frac{p_1}{q_1},\frac{p_2}{q_2},\ldots\}$ and  $\theta$ is Liouville, therefore irrational.

Thus Theorem \ref{thm:main} follows.
\end{proof}

\section{Proof of Theorem \ref{thm:main-claim}} 
%Theorem \ref{thm:main}} 

%Since multiplication with a non-zero constant does not change convergence or divergence of weighted averages, 
%By $|e(P(0))|=1$, 
%We assume without loss of generality that $P(0)=0$.

%%%%%%%%%%%  step 2  %%%%
\begin{proof}[Proof of Theorem \ref{thm:main-claim}]
Let $P\in \Z[\cdot]$ satisfy $P(0)=0$, let $\frac{p}{q}\in[0,1)$ be arbitrary and $k\in\N$.
We will define bounded positive functions $f_{1},\ldots,f_{q}$ and then $f:=\sum_{l=1}^{q} f_{l}$ on $\Z_{\tau q^2}$  for some large $\tau\in\N$ with the normalised counting measure and the right shift.
%some large integer interval  $\{0,1,\ldots,\tau q^2\}$ together with the shift and the normalized counting measure. 
Denote $N_0:=1$.
%, $\veps:=\frac1{q^2k}$ and $C:=2qk$.

\medskip
%%%%%%%   f_{1}   %%%%%%%%

\underline{Construction of $  {N}_1$, $\tau_1$ and $f_{1}$ on $\Z_{\tau_{1}q^2}$.}

Let  $0<\veps_1\leq 1$ and $C_1>0$  to be chosen later.
By Corollary \ref{cor:black-box},  used with  ${\mathbf N}_1:=1$, %$C:=2q$ and $\veps_{}:=\frac1{q^2}$ 
there exist $\tau_{1}\in \N$ with $\tau_{1}\gg q$, the right shift transformation $T$ modulo $\tau_{1}$ on $\Z_{\tau_{1}}$ with the normalised counting measure, a set
$\tilde{E}_{1}\subset \Z_{\tau_{1}}$ with 
proportion less than $\veps_1$ in $\Z_{\tau_{1}}$,
%$|E_{1}|<\tau_{1}\veps_{1}$, 
a function
$\tilde{f}_{1}$ on $\Z_{\tau_{1}}$ with $\tilde{f}_{1}\geq 0$ and  $\int_{\Z_{\tau_{1}}} \tilde{f}_{1}\leq \veps_1$, and $N_1:={\mathbf N}_2>1={\mathbf N}_1$ such that the inequality
\begin{equation}\label{eq:black-box}
\max_{N\in[1, N_1]} \aveN T^{n^2}\tilde{f} _{1}(l)>C_1
\end{equation}
holds for every $l\in \Z_{\tau_{1}}\setminus \tilde{E}_{1}$.  

Consider $\Z_{\tau_{1}q^2}$ together with the normalised counting measure and the right translation which we denote by $T$ again.
Consider further the  function $f_{1}$ on $\Z_{\tau_{1}q^2}$ given by 
$$
f_{1}(l):=
\begin{cases}
\tilde{f}_{1}(l/q^2) & \quad \text{if } q^2\mid l,\\
0&\quad \text{otherwise}.
\end{cases}
$$

The support of this function is contained in 
$$
U_{1}:=\{0,q^2,2q^2,\ldots, (\tau_{1}-1)q^2
%N_1q_1^2
\}\subset \Z_{\tau_{1}q^2}
$$ 
and $\int_{\Z_{\tau_{1}q^2}}f_{1}\leq \frac{\veps_1}{q^2}
$ holds.
Observe
\begin{equation}\label{eq:zero}
(T^{n^2}f_{1})(l)=f_{1}\left(l+n^2\right) =0 \quad\text{whenever}\ \ q\nmid n \text{ and }l\in U_{1}.
\end{equation}
Moreover, for every $n\leq qN_1$  and every $\theta\in [\frac{p}{q}-r,\frac{p}{q}+r]$ observe
$$
|e(\theta P(n))-e(p/qP(n))|\leq2\pi |\theta P(n)-p/qP(n)|\leq 2\pi r \sup_{[1,qN_1]}|P| .
$$
By choosing $r<\frac1{4\pi \sup_{[1,qN_1]}|P|}$ we have by $P(0)=0$
\begin{equation}\label{eq:one-half}
|e(\theta P(n))-1|<1/2,\quad  \forall\theta\in \left[\frac{p}{q}-r,\frac{p}{q}+r\right],  
%\text{ and } 
\quad\forall n\leq qN_1 \text{ divisible by } q.
\end{equation}

Define $E_1:=\{mq^2:\, m\in \tilde{E_1}\}$ and
consider now $l=mq^2\in U_1\setminus E_1$.
%U_{1}$ with $l=mq^2$ for some $m\in \Z_{\tau_{1}}\setminus E_{1}$. 
The inequalities (\ref{eq:black-box}), (\ref{eq:zero}) and (\ref{eq:one-half}) imply 
\begin{eqnarray*}
\nonumber\sup_{1\leq N\leq qN_1}\Re \aveN e(\theta P(n))  (T^{n^2}f_{1}) (l)
&=&
\nonumber\sup_{1\leq N\leq qN_1} \frac1N \sum_{n\leq N,\ q\mid n} \Re (e(\theta P(n)))  (T^{n^2}f_{1}) (l)\\
&\geq& 
\nonumber\frac12\cdot \sup_{1\leq N\leq q N_1} \frac1N \sum_{n\leq N,\ q\mid n}  (T^{n^2}f_{1}) (l)\\
&=&
\nonumber\frac1{2q}\cdot \sup_{1\leq N\leq qN_1} \frac{q}N \sum_{n\leq N,\ q\mid n}  \tilde{f}_{1} \left(m+\frac{n^2}{q^2}\right)\\
&\geq&
\nonumber\frac1{2q}\cdot \sup_{1\leq N\leq N_1} \aveN  (T^{n^2}\tilde{f}_{1}) (m)
> 
\frac{C_1}{2q}.%=k.
\label{eq:f_1,1}
\end{eqnarray*}

\medskip

%%%%%%%   f_{2}   %%%%%%%%
\underline{Construction of $N_2$ and $f_{2}$ on $\Z_{\tau_{2}q^2}$.}

Let $\veps_2\leq \veps_1$ and $C_2 \geq C_1$ to be chosen later.
By Corollary \ref{cor:black-box},
% for the above $N_1$, %$C$ and $\veps$, 
there exist $\tau_{2}\in \N$ with $\tau_{2}\gg q$, the right shift transformation $T$ modulo $\tau_{2}$ on $\Z_{\tau_{2}}$ with the normalised counting measure, a set
$\tilde{E}_{2}\subset \Z_{\tau_{2}}$ with %$|E_{2}|<\tau_{2}\veps_{2}$, 
proportion less than $\veps_2$ in $\Z_{\tau_{2}}$, 
a function
$\tilde{f}_{2}$ on $\Z_{\tau_{2}}$ with $\tilde{f}_{2}\geq 0$ and  $\int_{\Z_{\tau_{2}}} \tilde{f}_{2}\leq \veps_2$, and $N_2={\mathbf N}_2>N_1={\mathbf N}_1$ such that the inequality
\begin{equation}\label{eq:black-box-N_2}
\max_{N\in[N_1,N_2]} \aveN T^{n^2}\tilde{f} _{2}(l)>C_2
\end{equation}
holds for every $l\in \Z_{\tau_{2}}\setminus \tilde{E}_{2}$. 

%We now extend $f_{1,1}$ periodically to $\Z_{q^2\tau_{1,1}\tau_{1,2}}$ and denote it still by $f_{1,1}$. Then (\ref{eq:f_1,1}) still holds for every $l\notin E_{1,1}+\tau_{1,1}q_1^2\N$, and  $\int_{\Z_{\tau_{1,1}q_1^2}}f_{1,1}\leq \frac{\veps_{1,1}}{q_1^2}$.

Stretch  $\tilde{f}_{2}$  to $\Z_{q^2\tau_{2}}$ as follows. Define  the  function $f_{2}$ on $\Z_{\tau_{2}q^2}$ given by 
$$
f_{2}(l):=
\begin{cases}
\tilde{f}_{2}\left(\frac{l-1}{q^2}\right) & \quad \text{if } q^2\mid (l-1),\\
0&\quad \text{otherwise}.
\end{cases}
$$
The support of this function is contained in 
$$
U_{2}:=\{1,q^2+1,2q^2+1,\ldots, (\tau_{2}-1)q^2+1
%N_1q_1^2
\}\subset \Z_{\tau_{2}q^2}
$$ 
and $\int_{\Z_{\tau_{2}q^2}}f_{2}\leq \frac{\veps_2}{q^2}$ holds.

Observe
\begin{equation}\label{eq:zero-f_1,2}
(T^{n^2}f_{2})(l)=f_{2}\left(l+n^2\right) =0 \quad\text{whenever}\ \ q\nmid n \text{ and }l\in U_{2}.
\end{equation}
 Moreover, for every $n\leq qN_2$  and every $\theta\in [\frac{p}{q}-r,\frac{p}{q}+r]$ observe
$$
\Big |e(\theta P(n))-e\left( \frac{p}{q}  P(n)\right)\Big |\leq2\pi \Big |\theta P(n)- \frac{p}{q}  P(n)\Big |\leq 2\pi r \sup_{[1,qN_2]}|P|.
$$
By choosing $r<\frac1{4\pi \sup_{[1,qN_2]}|P|}$ we have  by $P(0)=0$
\begin{equation}\label{eq:one-half-f_1,2}
|e(\theta P(n))-1|<1/2\quad  \forall\theta\in \left[\frac{p}{q}-r,\frac{p}{q}+r\right]
%\text{ and } 
\quad\forall n\leq qN_2 \text{ divisible by } q.
\end{equation}

Define now $E_2:=\{mq^2+1:\, m\in \tilde{E_2}\}$ and
consider  $l = mq^2+1\in U_2\setminus E_2$.
%Consider now $l\in U_{2}$ with $l=mq^2+1$ for some $m\in \Z_{\tau_{2}}\setminus E_{2}$.
 The inequalities (\ref{eq:black-box-N_2}), (\ref{eq:zero-f_1,2}) and (\ref{eq:one-half-f_1,2}) imply 
\begin{eqnarray*}
\nonumber\sup_{qN_1\leq N\leq qN_2}&\Re& \aveN e(\theta P(n))  (T^{n^2}f_{2}) (l)
\\
&=&
\nonumber\sup_{qN_1\leq N\leq qN_2} \frac1N \sum_{n\leq N,\ q\mid n} \Re (e(\theta P(n)))  (T^{n^2}f_{2}) (l)\\
&\geq& 
\nonumber\frac12\cdot \sup_{qN_1\leq N\leq q N_2} \frac1N \sum_{n\leq N,\ q\mid n}  (T^{n^2}f_{2}) (l)\\
&=&
\nonumber\frac1{2q}\cdot \sup_{qN_1\leq N\leq qN_2} \frac{q}N \sum_{n\leq N,\ q\mid n}  \tilde{f}_{2} \left(m+\frac{n^2}{q^2}\right)\\
&\geq&
\nonumber\frac1{2q}\cdot \sup_{N_1\leq N\leq N_2} \aveN  (T^{n^2}\tilde{f}_{2}) (m)
>  
\frac{C_2}{2q}.%=k.
%\label{eq:f_1,2}
\end{eqnarray*}

Now, choosing $C_2>c\|f_1\|_\infty$ with $c>0$ to be chosen later, we see that 
$$
\left\|\sup_{qN_1\leq N\leq qN_2} \Re  \aveN e(\theta P(n))  T^{n^2}f_{1}\right\|_\infty\leq \|f_1\|_\infty<\frac{C_2}{c}.
$$

\medskip

%%%%%%%%%%%  f  %%%%%%%%%%

\underline{Construction of $\tau$ and $f$}

In such a fashion we construct for every $j\in\{1,\ldots,q^2\}$ and for $\veps_j\leq \veps_{j-1}, C_j>0$ to be chosen later (where $\veps_0:=1$)
%and for $N_1:=N_{k-1}$
%, $C:=2qk$ and $\veps_{}:=\frac1{q^2k}$, 
%$C_{1,j}$ and $\veps_{1,j}$ to be defined later,
an integer  
$\tau_{j}\gg q$, a set $E_{j}\subset \Z_{\tau_{j}q^2}$ with proportion less than $\veps_j$ in $Z_{\tau_jq^2}$, a natural number $N_j> N_{j-1}$  and positive bounded functions $f_{j}$ on $\Z_{\tau_{j}q^2}$  with $\int f_j\leq \veps_j$ 
%integral less than $\veps_{j}/q^2$
 and supported on 
$$
U_{j}:=\{j-1,q^2+j-1,2q^2+j-1,\ldots, (\tau_{j}-1)q^2+j-1
\}\subset \Z_{\tau_{j}q^2}
$$
such that for every $l\in U_{j}\setminus E_j$ and every $\theta\in [\frac{p}{q}-r,\frac{p}{q}+r]$ with $r<\frac{1}{4\pi \sup_{[1,qN_j]}|P|}$
%with $l=mq^2+j$ for some $m\in \Z_{\tau_{j}}\setminus E_{j}$,
\begin{eqnarray}\label{eq:f_1,j}
\sup_{qN_{j-1}\leq N\leq qN_j}\Re \aveN e(\theta P(n))  (T^{n^2}f_{j}) (l)
%=\sup_{qN_{j-1}\leq N\leq qN_j}\Re \frac1N \sum_{n\leq N,\ q\mid n} e(\theta P(n))  (T^{n^2}f_{j}) (l)
>\frac{C_{j}}{2q}.
\end{eqnarray}
Moreover, we choose $C_j$ large  enough to satisfy
% we also have 
\begin{eqnarray}\label{eq:f_1,j-supnorm}
%\left\|\sup_{qN_1\leq N\leq qN_2}\Re \aveN e(\theta P(n))  T^{n^2}f_{j}\right\|_\infty\leq 
 \max_{k\leq j-1}\left\|\sup_{qN_{j-1}\leq N\leq qN_j}  \Re  \aveN  e(\theta P(n))  T^{n^2}f_{k}\right\|_\infty\\ \nonumber \leq 
\max\{\|f_1\|_\infty,\ldots, \|f_{j-1}\|_\infty\}<\frac{C_{j}}{c}.
\end{eqnarray}
\smallskip 

We now consider $\tau:=\tau_{1}\cdot\ldots\cdot\tau_{q^2}$ and extend the functions $f_{j}$ and the sets $E_{j}$ periodically to $\Z_{\tau q^2}$. (We use the same notation for these extensions.) These sets and functions have the unchanged  proportion in $\Z_{\tau q^2}$ and unchanged integrals, respectively. Moreover, (\ref{eq:f_1,j}) holds for every $l\in \Z_{\tau q^2}\setminus E_{j}$ and (\ref{eq:f_1,j-supnorm}) is still true.
We denote by $\mmm$ the normalised counting measure on $\Z_{\tau q^2}$.
% with $l=mq^2+j$ for some $m\in \Z_{\tau}\setminus E_{j}$.

Define now $f:=f_{1}+\ldots+f_{q^2}$. We have by the monotonicity of $\veps_j$
$$
\int_{\Z_{\tau q^2}}f\leq q^2\veps_1\leq 1
$$
by choosing $\veps_1\leq \frac1{q^2}$.
Define further $E:=E_{1}\cup\ldots\cup E_{q^2}$. Note that the proportion of $E$ in $\Z_{\tau q^2}$  is  less than $\sum_{j=1}^{q^2}\veps_j\leq q^2\veps_1 \leq \frac1{2k}$ by choosing $\veps_1\leq \frac1{2kq^2}$.

%FROM HERE: A TRY}

%Assume first $\theta=p/q$.

Take $\theta\in[\frac{p}{q}-r,\frac{p}{q}+r]$ with $r<\frac{1}{4\pi \sup_{[1,qN_{q^2}]}|P|}$ and $l\in \Z_{\tau q^2}\setminus E$. Then $l\in U_j$ for some $j\in\{1,\ldots, q^2\}$. Let $N$ satisfy $qN_{j-1}\leq N\leq qN_j$ and decompose 
 \begin{eqnarray}
\aveN e(\theta P(n))  (T^{n^2}f) (l)&=&
\aveN e(\theta P(n))  (T^{n^2}f_j) (l)\label{*dec}
\\ \nonumber
&+&\sum_{1\leq m<j} \aveN e(\theta P(n))  (T^{n^2}f_m) (l)\\ \nonumber
&+&
\sum_{j<m\leq q^2} \aveN e(\theta P(n))  (T^{n^2}f_m) (l)\\ \nonumber
&=& I_N(l)+II_N(l)+III_N(l).
\end{eqnarray}

Observe that by (\ref{eq:f_1,j}), since $l\in U_j\setminus E_j$, 
$$
\sup_{qN_{j-1}\leq N\leq qN_j}|I_N(l)|>\frac{C_j}{2q}.
$$
Moreover, by (\ref{eq:f_1,j-supnorm}), 
\begin{equation}\label{*trie}
\sup_{N\in \N} |II_N(l)|\leq \sum_{1\leq m<j} \|f_m\|_\infty\leq q^2 \frac{C_j}{c}\leq 
 \frac{C_j}{8q}
\end{equation}
if we choose $c\geq 8q^3$.

Finally, by the triangle inequality and $\mu(f_m>1)\leq \int f_m\leq \veps_m$ for each $m$, 
\begin{eqnarray}
\nonumber\mu\left(
\sup_{qN_{j-1}\leq N\leq qN_j}
|III_N|>q^2\right)
&\leq& 
%\sum_{j<m\leq q^2} 
\mu\left(\sum_{j<m\leq q^2\ } \sup_{qN_{j-1}\leq N\leq qN_j}
\aveN  T^{n^2}f_m>q^2\right)\\ \label{*trik}
&\leq& 
\sum_{j<m\leq q^2\ } \mu\left( \sup_{qN_{j-1}\leq N\leq qN_j}
\aveN   T^{n^2}f_m>1\right)\\ \nonumber
&\leq& 
\sum_{j<m\leq q^2\ }
\sum_{n\leq qN_j} \mu(T^{n^2}f_m>1)\\ \nonumber
&=& 
\sum_{j<m\leq q^2\ }\sum_{n\leq qN_j} \mu(f_m>1)\\ \nonumber
&\leq&
qN_j \sum_{j<m\leq q^2\ }\veps_m \leq q^3 N_j \veps_{j+1}\leq \frac1{2kq^2}
% \veps{}{}
\end{eqnarray}
by choosing $\veps_{j+1}\leq  {1}/{(2kq^5N_j)}$. 

Denote now 
$
F_j:=\{l\in U_j:\, \sup_{qN_{j-1}\leq N\leq qN_j}|III_N|>q^2\}
$
and $F:=\cup_{j=1}^{q^2}F_j$.
By the above, $\mu(F)\leq\sum_{j=1}^{q^2}\frac1{2kq^2}=\frac1{2k}$. Thus if we assume in addition that $l\notin F$, the triangle inequality \eqref{*dec}, \eqref{*trie} and \eqref{*trik}  lead to 
\begin{eqnarray*}
\sup_{qN_{j-1}\leq N\leq qN_j}\aveN e(\theta P(n))  (T^{n^2}f) (l)
&>&
\frac{C_j}{2q} - \frac{C_j}{8q}-q^2\\
&=&\frac{C_j}{4q} -q^2\geq k
%frac{C_j}{4q}
\end{eqnarray*}
if we choose $C_j\geq 4kq+q^2$.
Therefore, for every $l\in \Z_{\tau q^2}\setminus (E\cup F)$
$$
\sup_{N\in\N}\aveN e(\theta P(n))  (T^{n^2}f) (l)>k.
$$
Since $\mu(E\cup F)\leq 2\frac1{2k}=\frac1k$, the proof is complete.
 \end{proof}

\bigskip

\section{Further questions}

There are many open questions related to our results. We just mention two  here.
\begin{itemize}
\item[1)] Is every Liouville number universally $L^1$-bad? 
\item[2)] Is there an $L^1$-good number?
%\item[3)] Monomials instead of squares? Polynomials? 
%\item[4)] Is the M\"obius function a good weight in $L^1$? (In $L^p$,  $p > 1$ it is.)
\end{itemize}

\section{Acknowledgement}

We thank the referee of this paper for making valuable comments and pointing out some useful references.

% \bibliographystyle{amsabbrv}
% \bibliography{wiener-subseq}
\parindent0pt

\end{document}